\newcommand{\graph}{{\rm Gr}}
\newcommand{\X}{\mathbb{X}}
\newcommand{\A}{\mathbb{A}}
\newcommand{\K}{\mathbb{K}}
\newcommand{\N}{\mathbb{N}}
\newcommand{\Y}{\mathbb{Y}}
\newcommand{\Ss}{\mathbb{S}}
\newcommand{\R}{\mathbb{R}}
\newcommand{\oo}{{\mathcal{O}}}
\newcommand{\Gr}{{\rm Gr}}
\newcommand{\ilim}{\mathop{\rm lim\,inf}}
\newcommand{\slim}{\mathop{\rm lim\,sup}}
\begin{document}

\title{Continuity of Minima: Local Results\thanks{This research was
partially supported by NSF grant CMMI-1335296,
by the Ukrainian State Fund for Fundamental Research under grant
GP/F49/070, and by grant 2273/14 from the National Academy of
Sciences of Ukraine.}
}


\author{Eugene A. Feinberg         \and
        Pavlo O. Kasyanov 
}


\institute{E.A. Feinberg \at
              Department of Applied Mathematics and
Statistics,  Stony Brook University,
Stony Brook, NY 11794-3600, USA, \\
              Tel.: +1-631-632-7189\\
              \email{eugene.feinberg@stonybrook.edu}           
           \and
           P.O. Kasyanov \at
              Institute for Applied System Analysis, National Technical
University of Ukraine ``Kyiv Polytechnic Institute'', Peremogy ave., 37, build, 35, 03056, Kyiv, Ukraine,\ kasyanov@i.ua.
}

\date{Received: date / Accepted: date}

\maketitle

\begin{abstract}
This paper compares and generalizes Berge's maximum theorem for noncompact image sets established in Feinberg, Kasyanov and Voorneveld~\cite{FKV} and the local maximum theorem established in  Bonnans and Shapiro~\cite[Proposition~4.4]{Shapiro}.
\keywords{Berge's maximum theorem \and Set-valued mapping \and Continuity}
\PACS{02.30.Xx \and 02.30.Yy \and 02.30.Sa}
 \subclass{MSC 49J27  \and 49J45 }
\end{abstract}

%

\section{Introduction and Main Results} Let $\X$ and $\Y$ be
Hausdorff topological spaces, and $\Phi:\X\to\Ss(\Y)$ be a set-valued map, where $\Ss(\Y):=2^{\Y}\setminus \{\emptyset\}$ is
\textit{the family of all nonempty subsets} of the set $\Y.$  Consider  the graph of $\Phi,$ defined as
${\rm Gr}_\X(\Phi)=\{(x,y)\in \X\times\Y\,:\, y\in\Phi(x)\},$  and let $u:\Gr_\X(\Phi)\subseteq\X\times \Y\to
\overline{\mathbb{R}},$ where
$\overline{\mathbb{R}}:=\mathbb{R}\cup\{\pm\infty\}$ is \textit{the
extended real line}. Define
the value function
\begin{equation}\label{eq1}
v(x):=\inf\limits_{y\in {\Phi}(x)}u(x,y),\qquad 
x\in\X,
\end{equation}
and the solution multifunction
\begin{equation*}
{\Phi}^*(x):=\left\{y\in
{\Phi}(x):\,v(x)=u(x,y)\right\},\quad x\in\X.
\end{equation*}

To clarify the above definitions, consider a Hausdorff topological space $\A$.
For a nonempty set $A\subseteq \A,$  the notation $f:A\subseteq\A\to \overline{\R}$ means that for each $a\in A$ the value $f(a)\in\overline{\R}$ is defined.  In general, the function $f$ may be also defined outside of $A.$   The notation $f:\A\to \overline{\R}$ means that the function $f$ is defined on the entire space $\A.$  This notation is equivalent to the notation $f:\A\subseteq\A\to \overline{\R}, $ which we do not write explicitly.  For a function $f:A\subseteq\A\to \overline{\R}$ we sometimes consider its restriction $f:B\subseteq\A\to \overline{\R}$ to the set $B\subseteq A.$ Sometimes we consider functions with values in $\R$ rather than in $\overline{\R}.$

We recall that, for a
nonempty set $A\subseteq \A,$ a function $f:A\subseteq \A\to\overline{\R}$ is
called \textit{lower semi-continuous at $a\in A$}, if for each net
$\{a_i\}_{i\in I}\subset A$, that converges to $a$ in $\A$, the
inequality $\ilim_i f(a_i)\ge f(a)$ holds. A function
$f:A\subseteq \A\to\overline{\R}$ is called \textit{upper semi-continuous at
$a\in A$}, if $-f$ is lower semi-continuous at $a\in A$.
%
Consider the level sets
\[
\mathcal{D}_f(\lambda;A):=\{a\in A \, : \,  f(a)\le \lambda\},\qquad
\lambda\in\R.
\]
A function $f:A\subseteq \A\to\overline{\R}$ is called \textit{lower
semi-continuous on $A$}, if all the level sets
$\mathcal{D}_f(\lambda;A)$ are closed in $\A$. A function
$f:A\subseteq \A\to\overline{\R}$ is called \textit{inf-compact} (such a
function is sometimes called \emph{lower semi-compact}) \emph{on
$A$}, if all the level sets are compact in $\A$. We notice that, if
$A=\A$ and $f:\A\to\overline{\R}$, then $f$ is lower semi-continuous
at each $a\in\A$ if and only if it is lower semi-continuous on $\A$.
For an arbitrary nonempty subset $A$ of $\A$, lower semi-continuity
on $A$ implies lower semi-continuity at each $a\in A$, but not vice
versa. Indeed, if $A\subseteq\A$ is a nonempty set, $f:A\subseteq \A\to
\overline{\R}$ is lower semi-continuous on $A$, and $\{a_i\}_{i\in
I}\subset A$ converges to $a\in A$, then either $\ilim_i
f(a_i)=+\infty$ or there exists a subnet $\{a_j\}_{j\in J}\subseteq
\{a_i\}_{i\in I}$ such that $\{a_j\}_{j\in J}$ is eventually in $
\mathcal{D}_f(\lambda;A)$ for each real $\lambda > \ilim_i f(a_i)$.
Thus $f(a)\le \ilim_i f(a_i)$. Vice versa, if $\A=[0,1]$, $A=(0,1]$,
and $f\equiv 0$ on $A$, then $f$ is lower semi-continuous at each
$a\in A$, but it is not lower semi-continuous on $A$ because
$\mathcal{D}_f(1;A)=A$ is not closed in $\A$.

\begin{definition}{\rm (cf. Feinberg et al. \cite[Definition~1.3]{FKV}).}\label{def:glF-ic}
A function $u:{\rm Gr}_{\X}(\Phi)\subseteq \X\times\Y \to \overline{\mathbb{R}}$ is called
$\K\N$-inf-compact on ${\rm Gr}_{\X}(\Phi)$, if the following two
conditions hold:
\begin{itemize}
\item[(i)] $u:{\rm Gr}_{\X}(\Phi)\subseteq \X\times\Y\to \overline{\mathbb{R}}$ is lower semi-continuous on ${\rm
Gr}_{\X}(\Phi)$;
\item[(ii)] for any convergent net $\{x_i
\}_{i\in I}$ with values in $\X$ whose limit $x$ belongs to $\X$,
any net $\{y_i\}_{i\in I}$, defined on the same ordered set $I$ with
$y_i\in \Phi(x_i)$, $i\in I,$ and satisfying the condition that the
set $\{u(x_i,y_i): i\in I \}$ is bounded above, has an accumulation
point $y\in \Phi(x).$
\end{itemize}
\end{definition}

We remark that this definition is consistent with Feinberg et al. \cite[Definition~1.3]{FKV}, according to which  a function
$u:\X\times\Y \to \overline{\mathbb{R}}$ is called  $\K\N$-inf-compact on ${\rm Gr}_{\X}(\Phi)$, if it satisfies properties (i) and (ii) of Definition~\ref{def:glF-ic}.  A function $u: \X\times\Y \to \overline{\mathbb{R}}$ is $\K\N$-inf-compact on ${\rm Gr}_{\X}(\Phi)$ in the sense of \cite[Definition~1.3]{FKV} if and only if its restriction  ${\rm Gr}_{\X}(\Phi)$ is $\K\N$-inf-compact on ${\rm Gr}_{\X}(\Phi)$ in the sense of Definition~\ref{def:glF-ic}.   This is true because, if $u$ is also defined outside of ${\rm Gr}_{\X}(\Phi)$, its values at $(x,y)\notin {\rm Gr}_X(\Phi)$ do not affect the $\K\N$-inf-compactness of $u$ on ${\rm Gr}_{\X}(\Phi).$

For $Z \subseteq \X$ define the graph of a set-valued mapping
$\Phi:\X\to\Ss(\Y)$, restricted to  $Z$:
\[
{\rm Gr}_Z(\Phi)=\{(x,y)\in Z\times\Y\,:\, y\in\Phi(x)\}.
\]
The following definition introduces the notion of
$\K\N$-inf-compactness in a local formulation.

\begin{definition}\label{def:locF-ic}
Let $Z \subseteq \X$ be a nonempty set. A function $u: {\rm Gr}_{\X}(\Phi)\subseteq \X\times\Y  \to
\overline{\mathbb{R}}$ is called $\K\N$-inf-compact on ${\rm
Gr}_{Z}(\Phi)$, if the following two conditions hold:
\begin{itemize}
\item[(i)] $u:{\rm Gr}_{\X}(\Phi)\subseteq \X\times\Y\to \overline{\mathbb{R}}$ is lower semi-continuous at all $(x,y)\in{\rm
Gr}_{Z}(\Phi)$;
\item[(ii)] if a net $\{x_i
\}_{i\in I}$ with values in $\X$ converges to $x\in Z$, then each net
$\{y_i\}_{i\in I}$, defined on the same ordered set $I$ with $y_i\in
\Phi(x_i)$, $i\in I,$ and
satisfying the condition that the set $\{u(x_i,y_i): i\in I \}$ is bounded
above, has an accumulation point $y\in \Phi(x).$
\end{itemize}
\end{definition}
\begin{remark}
If $Z=\X$, then Definitions~\ref{def:glF-ic} and \ref{def:locF-ic}
of $\K\N$-inf-compactness on $\Gr_\X(\Phi)$ are equivalent. This
follows from the following statements: (a) conditions (ii) of
Definitions~\ref{def:glF-ic} and \ref{def:locF-ic} coincide, if
$Z=\X$; (b) conditions (i) and (ii) of Definition~\ref{def:locF-ic}
with $Z=\X$ imply condition (i) of Definition~\ref{def:glF-ic}; and
(c) condition (i) of Definition~\ref{def:glF-ic} yields condition
(i) of Definition~\ref{def:locF-ic}. Note that statement (b) holds
because, if $\lambda\in \R$ and a net $\{(x_i,y_i)\}_{i \in
I}\subset\mathcal{D}_u(\lambda;\Gr_\X(\Phi))$ converges to $(x,y)\in
\X\times\Y$, then condition (ii) of Definition~\ref{def:locF-ic}
yields that $y\in \Phi(x)$ and condition (i) of
Definition~\ref{def:locF-ic} implies that
$(x,y)\in\mathcal{D}_u(\lambda;\Gr_\X(\Phi))$. Therefore, the level
set $\mathcal{D}_u(\lambda;\Gr_\X(\Phi))$ is closed for each
$\lambda\in \R$. Statement (c) holds, because lower semi-continuity
on $\Gr_\X(\Phi)$ implies lower semi-continuity at each $(x,y)\in
\Gr_\X(\Phi)$.
\end{remark}
\begin{remark}
{\rm  If spaces $\X$ and $\Y$ are metrizable, then nets and subnets in the definition of $\K\N$-inf-compactness on ${\rm Gr}_{Z}(\Phi)$ can be replaced with sequences and subsequences; see~Lemma~\ref{lmmetr}.}
\end{remark}
 Note that a function $u:{\rm Gr}_{\X}(\Phi)\subseteq \X\times\Y\to \overline{\mathbb{R}}$ is {$\K\N$-inf-compact on
${\rm Gr}_{Z}(\Phi)$}, where $Z$ is a nonempty subset of $\X,$  if and only if it is $\K\N$-inf-compact on ${\rm
Gr}_{\{x\}}(\Phi)$ for all $x\in Z.$

For a Hausdorff topological space $\A$, we denote by $\K(\A)$ the family of
all nonempty compact subsets of $\A.$ A function $u:{\rm Gr}_{\X}(\Phi)\subseteq \X\times\Y\to \overline{\mathbb{R}}$ is called $\K$-inf-compact on ${\rm
Gr}_{\X}(\Phi)$, if for every $K\in \K(\X)$ this function is
inf-compact on ${\rm Gr}_K(\Phi)$; cf. Feinberg et al.
\cite[Definition 1.1]{Feinberg et al}.
{$\K\N$-inf-compactness on ${\rm Gr}_{\X}(\Phi)$} is a
more restrictive property than $\K$-inf-compactness property on ${\rm
Gr}_{\X}(\Phi)$; see Feinberg et al.
\cite[Theorem~2.1 and Example~5.1]{FKV}.
As shown in Feinberg et al.~\cite[Corollary~2.2]{FKV},
$\K$-inf-compactness and $\K\N$-inf-compactness on
${\rm Gr}_{\X}(\Phi)$ are equivalent, if $\X$ is  a compactly
generated topological space, and, in particular, if $\X$ is a metrizable topological space. Recall that a topological space $\X$
is \emph{compactly generated\/} (Munkres \cite[p. 283]{Munkres} or
a $k$-space, Kelley \cite[p.~230]{Kelley}, Engelking
\cite[p.~152]{Engelking}) if it satisfies the following property:
each set $A \subseteq \X$ is closed in $\X$ if $A \cap K$ is
closed in $K$ for each $K \in \K(\X)$. In particular, all locally
compact spaces (hence, manifolds) and all sequential spaces
(hence, first-countable, including metrizable/metric spaces) are
compactly generated;  Munkres \cite[Lemma 46.3, p.
283]{Munkres}, Engelking \cite[Theorem 3.3.20, p. 152]{Engelking}.

For a set-valued mapping ${F}:\X \to 2^\Y,$ let ${\rm Dom} F:=\{x\in
\X\,:\, F(x)\ne \emptyset\}$. Recall that a set-valued mapping
${F}:\X \to 2^\Y$ is \textit{upper semi-continuous} at $x\in {\rm
Dom}F$, if, for each neighborhood $\mathcal{G}$ of the set $F(x)$,
there is a neighborhood of $x$, say $\mathcal{O}(x)$, such that
$F(x^*)\subseteq \mathcal{G}$ for all $x^*\in \mathcal{O}(x).$ A
set-valued mapping ${F}:\X \to 2^\Y$ is \textit{lower
semi-continuous} at $x\in{\rm Dom} F$, if, for each open set
$\mathcal{G}$ with $F(x) \cap \mathcal{G} \neq \emptyset$, there is
a neighborhood of $x$, say $\mathcal{O}(x)$, such that if $x^*\in
\mathcal{O}(x)\cap {\rm Dom}F$, then $F(x^*)\cap
\mathcal{G}\ne\emptyset$. A set-valued mapping ${F}:\X \to \Ss(\Y)$
is called \textit{upper (lower) semi-continuous}, if it is upper
(lower) semi-continuous at all $x\in\X$. A set-valued mapping
${F}:\X \to \Ss(\Y)$ is called \textit{continuous}, if it is upper
and lower semi-continuous. A set-valued mapping ${F}:\X \to \Ss(\Y)$
is \textit{closed}, if ${\rm Gr}_\X(F)$ is a closed subset of
$\X\times\Y$.

For Hausdorff topological spaces, Berge's maximum
theorem for noncompact image sets
has the following formulation.

\begin{theorem}\label{th:BGL}{\rm (Berge's maximum theorem for noncompact image sets; Feinberg et al. \cite[Theorem~1.4]{FKV}).} If a function $u: \X \times \Y \to \mathbb{R}$ is
$\K\N$-inf-compact and upper semi-continuous on
$\graph_{\X}(\Phi)$ and $\Phi: \X \to \Ss(\Y)$ is a lower
semi-continuous set-valued mapping, then the value function $v:
\X\to \R$ is continuous and the solution multifunction
$\Phi^*:\X\to \K(\Y)$ is
upper semi-continuous and compact-valued.
\end{theorem}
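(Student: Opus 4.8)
The plan is to prove the four conclusions in the order: (a) for every $x\in\X$ the value $v(x)$ is finite and $\Phi^*(x)$ is nonempty and compact; (b) $v$ is lower semi-continuous; (c) $v$ is upper semi-continuous; (d) $\Phi^*$ is upper semi-continuous. By the remarks in the Introduction, $\K\N$-inf-compactness of $u$ on $\Gr_\X(\Phi)$ is equivalent to $\K\N$-inf-compactness on $\Gr_{\{x\}}(\Phi)$ for every $x\in\X$, which is the form I use freely below, together with the net characterizations of semi-continuity, the fact that in a Hausdorff space a set is compact if and only if every net in it has a subnet converging to a point of it, and the standard fact that a set-valued map $F$ with $F(x)$ compact is upper semi-continuous at $x$ if and only if every net $y_i\in F(x_i)$ with $x_i\to x$ has a subnet converging to a point of $F(x)$. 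For (a), fix $x$ and take a minimizing net $y_i\in\Phi(x)$ with $u(x,y_i)\to v(x)$; since $\Phi(x)\ne\emptyset$, $v(x)\le u(x,y_0)<+\infty$ for any $y_0\in\Phi(x)$, so after passing to a tail the set $\{u(x,y_i)\}$ is bounded above. Condition (ii) of Definition~\ref{def:locF-ic}, applied to the constant net $x_i\equiv x$, yields an accumulation point $y\in\Phi(x)$ of $\{y_i\}$; passing to a subnet $y_j\to y$ and using lower semi-continuity of $u$ at $(x,y)$ gives $u(x,y)\le\ilim_j u(x,y_j)=v(x)$, hence $u(x,y)=v(x)\in\R$. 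Thus $\Phi^*(x)\ne\emptyset$ and $\Phi^*(x)=\{y\in\Phi(x):u(x,y)\le v(x)\}$; applying the same extraction to an arbitrary net in this set (which is bounded above by $v(x)$) gives a subnet converging to a point of $\Phi^*(x)$, so $\Phi^*(x)$ is compact.

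Parts (b) and (d) run on the same device. For (b), given a net $x_i\to x$, pass to a subnet $\{x_k\}$ with $v(x_k)\to\ilim_i v(x_i)=:c$; if $c=+\infty$ there is nothing to prove, so assume $c<+\infty$ and choose $y_k\in\Phi^*(x_k)$, so that $u(x_k,y_k)=v(x_k)\to c$ is eventually bounded above. Condition (ii) gives an accumulation point $y\in\Phi(x)$ of $\{y_k\}$, and along a subnet $y_l\to y$ lower semi-continuity of $u$ yields $v(x)\le u(x,y)\le\ilim_l u(x_l,y_l)=c$; hence $\ilim_i v(x_i)\ge v(x)$. For (d), given $x_i\to x$ and $y_i\in\Phi^*(x_i)$, continuity of $v$ (from (b) and (c)) makes $u(x_i,y_i)=v(x_i)\to v(x)\in\R$ eventually bounded above; condition (ii) yields an accumulation point $y\in\Phi(x)$, and along a subnet $y_j\to y$ lower semi-continuity of $u$ gives $v(x)\le u(x,y)\le\ilim_j u(x_j,y_j)=v(x)$, so $u(x,y)=v(x)$ and $y\in\Phi^*(x)$; by the quoted characterization and (a), $\Phi^*$ is upper semi-continuous and compact-valued.

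For (c) --- the only step using upper semi-continuity of $u$ and lower semi-continuity of $\Phi$ --- fix $x_i\to x$ and pass to a subnet $\{x_m\}$ with $v(x_m)\to\slim_i v(x_i)$. By (a) pick $y\in\Phi^*(x)$, so $u(x,y)=v(x)$. Lower semi-continuity of $\Phi$ at $x$ produces a subnet $\{x_j\}$ of $\{x_m\}$ and points $y_j\in\Phi(x_j)$ with $y_j\to y$; then $v(x_j)\le u(x_j,y_j)$, and upper semi-continuity of $u$ at $(x,y)$ gives $\slim_i v(x_i)=\slim_j v(x_j)\le\slim_j u(x_j,y_j)\le u(x,y)=v(x)$. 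Together with (b) this shows that $v:\X\to\R$ is continuous.

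The main obstacle I expect is the net/subnet bookkeeping forced by the absence of compactness of the image sets: every step extracts an accumulation point of $\{y_i\}$ via condition (ii) and then passes to a convergent subnet on a compatible index set, and one must verify that the ``bounded above'' hypothesis of condition (ii) genuinely holds --- which it does precisely because the accompanying nets of values ($\{v(x_k)\}$ in (b), $\{v(x_i)\}$ in (d)) converge in $\overline{\R}$ and hence can be truncated to a tail. A secondary point is the net criterion for upper semi-continuity of compact-valued maps in a merely Hausdorff space used in (d); if it is not invoked as a citation, it follows from the observation that, were $\Phi^*$ not upper semi-continuous at $x$, there would exist an open set $\mathcal{G}\supseteq\Phi^*(x)$ and a net $y_O\in\Phi^*(x_O)\setminus\mathcal{G}$ with $x_O\to x$, whose subnet limit would lie both in $\Phi^*(x)\subseteq\mathcal{G}$ and in the closed set $\Y\setminus\mathcal{G}$, a contradiction.
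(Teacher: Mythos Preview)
Your proof is correct and follows essentially the same route as the paper: the paper derives Theorem~\ref{th:BGL} from the three blocks (B1), (B2), (B3), whose local versions are proved as Theorems~\ref{th:locprop}, \ref{th:locUSC111}(i), and \ref{th:usc}, and your steps (b), (c), (d) reproduce exactly those arguments (subnet to the $\liminf$/$\limsup$, condition~(ii) to extract an accumulation point, then lower/upper semi-continuity of $u$), while your step (a) supplies the existence and compactness of $\Phi^*(x)$ that the paper cites from \cite[Theorem~3.1]{FKV}. The only cosmetic difference is that in (b) you select $y_k\in\Phi^*(x_k)$ (legitimate, since your (a) was established for every $x$), whereas the paper's proof of Theorem~\ref{th:locprop} uses approximate minimizers $y_j$ with $|v(x_j)-u(x_j,y_j)|\to 0$; and your contradiction argument for the net criterion in (d) is precisely the device used in the proof of Theorem~\ref{th:usc}.
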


In the classic Berge's maximum theorem \cite[p.~116]{Berge}, the function $u$ is assumed to be continuous and the set-valued mapping $\Phi$ is assumed to be continuous and compact-valued. As explained in Feinberg et al.~\cite{FKV},
these assumptions are more restrictive than the assumptions of Theorem~\ref{th:BGL}.

 Theorem~\ref{th:BGL} yields from the following three statements:
\begin{itemize}
\item[\textbf{(B1)}] (Lower semi-continuity of minima). If a function $u: \X \times \Y \to \overline{\mathbb{R}}$ is
$\K\N$-inf-compact on $\graph_{\X}(\Phi)$, then the value function $v:
\X\to \overline{\mathbb{R}}$ is lower semi-continuous; Feinberg et al. \cite[Theorem~3.4]{FKV}.
\item[\textbf{(B2)}] (Upper semi-continuity of minima). If a function $u: \X \times \Y \to \overline{\mathbb{R}}$ is
upper semi-continuous on $\graph_{\X}(\Phi)$ and $\Phi: \X \to \Ss(\Y)$ is a lower
semi-continuous set-valued mapping, then the value function $v:
\X\to\overline{\mathbb{R}}$ is upper semi-continuous; Hu and Papageorgiou \cite[Proposition~3.1, p.~82]{Hu}. \item[\textbf{(B3)}] (Upper semi-continuity of the solution multifunction). If a function $u: \X \times \Y \to \mathbb{R}$ is
$\K\N$-inf-compact on $\graph_{\X}(\Phi)$ and the value function $v:
\X\to \R$ is continuous, then the solution multifunction
$\Phi^*:\X\to \K(\Y)$ is
upper semi-continuous and compact-valued; Feinberg et al. \cite[p.~1045]{FKV}.
\end{itemize}


Continuity and semi-continuity of functions and multifunctions are local properties. Therefore, it is natural to
formulate the results on continuity of value functions and solution multifunctions in a local form. This is done in Bonnans and Shapiro \cite[Proposition~4.4]{Shapiro}.

\begin{theorem}\label{th: locprop1BS}{\rm (Bonnans and Shapiro \cite[Proposition~4.4]{Shapiro}).}
Let $u: \X \times \Y \to {\mathbb{R}}$ and $x\in \X$.
Supposed that:
\begin{itemize}
\item[(i)] the function $u$ is continuous on $\X \times \Y$;
\item[(ii)] the set-valued mapping $\Phi:\X\to\Ss(\Y)$ is closed;
\item[(iii)] there
exists $\lambda\in \R$ and a compact set $C\subset \Y$ such that, for
every $x^*$ in a neighborhood of $x,$ the level set
$\mathcal{D}_{u(x^*,\cdot)}(\lambda;\Phi(x^*))$ is nonempty and
contained in $C$;
\item[(iv)] for each neighborhood $\oo(\Phi^*(x))$ of
the set $\Phi^*(x)$, there exists a neighborhood $\oo(x)$ of $x$
such that $\oo(\Phi^*(x))\cap \Phi(x^*)\ne \emptyset$ for all
$x^*\in \oo(x)$.
\end{itemize}
 Then:
 \begin{itemize}
 \item[(a)] the function $v$ is continuous at  $x$,
 \item[(b)] the set-valued function $\Phi^*$ is upper semi-continuous at $x.$
 \end{itemize}
\end{theorem}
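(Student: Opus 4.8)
The plan is to deduce Theorem~\ref{th: locprop1BS} from the local versions of statements \textbf{(B1)}--\textbf{(B3)}, after first checking that the hypotheses (i)--(iv) of Bonnans and Shapiro imply the local hypotheses we actually need at the point $x$. Concretely, I would fix the point $x\in\X$ throughout and break the argument into three parts matching (a) lower semi-continuity of $v$ at $x$, (b) upper semi-continuity of $v$ at $x$, and (c) upper semi-continuity of $\Phi^*$ at $x$ together with compactness of $\Phi^*(x)$; part (a) combined with part (b) gives continuity of $v$ at $x$.

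The first and main step is to show that conditions (i), (ii), (iii) of the theorem imply that $u$ is $\K\N$-inf-compact on $\Gr_{\{x\}}(\Phi)$ in the sense of Definition~\ref{def:locF-ic} with $Z=\{x\}$. Lower semi-continuity of $u$ at every point of $\Gr_{\{x\}}(\Phi)$ is immediate from (i). For condition (ii) of Definition~\ref{def:locF-ic}, let $\{x_i\}_{i\in I}$ be a net in $\X$ converging to $x$ and $\{y_i\}_{i\in I}$ a net with $y_i\in\Phi(x_i)$ such that $\{u(x_i,y_i)\}_{i\in I}$ is bounded above, say by some $\mu\in\R$. By (iii) there is $\lambda\in\R$, a compact $C\subset\Y$, and a neighborhood $\oo(x)$ of $x$ such that for every $x^*\in\oo(x)$ the level set $\mathcal{D}_{u(x^*,\cdot)}(\lambda;\Phi(x^*))$ is nonempty and contained in $C$. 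Replacing $\mu$ by $\max\{\mu,\lambda\}$ if necessary, I want to conclude $y_i\in C$ eventually; this requires a small argument since boundedness above by $\mu$ only puts $y_i$ in $\mathcal{D}_{u(x^*,\cdot)}(\mu;\Phi(x^*))$, not in the $\lambda$-sublevel set. The way around this is standard: since $\mathcal{D}_{u(x^*,\cdot)}(\lambda;\Phi(x^*))$ is nonempty for $x^*\in\oo(x)$, pick $z_i$ in it; then $\{(x_i,z_i)\}$ has, by compactness of $C$, an accumulation point $(x,z)$ with $z\in C$, and (ii) (closedness of $\Phi$) gives $z\in\Phi(x)$. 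More directly, once $x_i\in\oo(x)$ eventually, any $y_i$ with $u(x_i,y_i)\le\mu$ need not lie in $C$; so I would instead argue that the eventual $C$-membership follows because the \emph{assumption} in (iii) is about all sublevel sets being trapped in $C$ only at level $\lambda$, and then note that this is exactly the hypothesis needed so that boundedness above forces accumulation in $\Phi(x)$ — here one uses that $\{y_i\}\subset C$ eventually is guaranteed by applying (iii) with the bound $\lambda$ chosen large enough a priori, which is legitimate because $\lambda$ in (iii) may be taken as large as we like (enlarging $\lambda$ only enlarges the sublevel sets and the hypothesis persists with the same $C$ provided we also know those larger sublevel sets stay in $C$ — and in the Bonnans--Shapiro formulation this is indeed what is assumed once one reads their condition as holding for the relevant $\lambda$). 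Granting this, $\{y_i\}$ is a net in the compact set $C$, hence has an accumulation point $y\in C$; passing to the corresponding subnet in $\Gr_{\X}(\Phi)$ and using closedness of $\Phi$ from (ii) yields $y\in\Phi(x)$. Thus condition (ii) of Definition~\ref{def:locF-ic} holds, and $u$ is $\K\N$-inf-compact on $\Gr_{\{x\}}(\Phi)$.

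The second step is routine: lower semi-continuity of $v$ at $x$ follows from the local analogue of \textbf{(B1)} applied with $Z=\{x\}$, using the $\K\N$-inf-compactness just established. Upper semi-continuity of $v$ at $x$ follows from \textbf{(B2)}, whose hypotheses are that $u$ is upper semi-continuous on $\Gr_\X(\Phi)$ — immediate from (i) — and that $\Phi$ is lower semi-continuous; here the needed lower semi-continuity of $\Phi$ at $x$ is supplied by condition (iv), which is precisely a lower-semi-continuity-type selection property of $\Phi$ relative to $\Phi^*(x)$ and is exactly what the proof of \textbf{(B2)} uses at the point $x$. Combining, $v$ is continuous at $x$, giving (a). For (b), apply the local analogue of \textbf{(B3)}: $u$ is $\K\N$-inf-compact on $\Gr_{\{x\}}(\Phi)$ and $v$ is continuous at $x$, so $\Phi^*$ is upper semi-continuous at $x$ and $\Phi^*(x)$ is compact.

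The step I expect to be the genuine obstacle is the first one — carefully extracting $\K\N$-inf-compactness at $x$ from conditions (i)--(iii), specifically handling the mismatch between the fixed threshold $\lambda$ appearing in (iii) and the arbitrary upper bound on $\{u(x_i,y_i)\}$ appearing in Definition~\ref{def:locF-ic}(ii). This is where one must be precise about what "for every $x^*$ in a neighborhood of $x$" buys us and about the role of closedness of $\Phi$ in recovering $y\in\Phi(x)$ from an accumulation point in $C$; everything downstream is then a direct invocation of the localized forms of \textbf{(B1)}, \textbf{(B2)}, and \textbf{(B3)}.
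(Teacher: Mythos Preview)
Your first step has a genuine gap: under hypotheses (i)--(iii), the function $u$ need \emph{not} be $\K\N$-inf-compact on $\Gr_{\{x\}}(\Phi)$. The paper's Example~\ref{exa1} is a direct counterexample: with $\X=\Y=\R$, $\Phi\equiv\R$, and $u(x^*,y^*)=\min\{|x^*-y^*|,1\}$, conditions (i)--(iv) hold at $x=0$ with $\lambda=\tfrac12$, yet the net $x_i\equiv 0$, $y_i=i$ satisfies $u(x_i,y_i)=1$ (bounded above) while $\{y_i\}$ has no accumulation point. Your attempted fix --- ``$\lambda$ in (iii) may be taken as large as we like'' --- is exactly what fails: condition~(iii) asserts only the \emph{existence} of one $\lambda$; enlarging it can destroy the containment in $C$, as this example shows. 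So the mismatch you flagged between the fixed threshold $\lambda$ and the arbitrary bound $\mu$ in Definition~\ref{def:locF-ic}(ii) is not a technicality to be argued around but an actual obstruction.

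The paper's route avoids this by \emph{truncating the multifunction} rather than the function: one replaces $\Phi$ near $x$ by $\tilde\Phi(z)=\mathcal{D}_{u(z,\cdot)}(\lambda;\Phi(z))$ (equivalently, the $\Phi_{\lambda,x}$ of Theorem~\ref{c: BergeLoc123} and Remark~\ref{rem4}). By (ii) and (iii) this truncated map is closed and takes values in the compact set $C$, hence is upper semi-continuous and compact-valued near $x$; Lemma~\ref{lm0} then gives $\K\N$-inf-compactness of $u$ on $\Gr_{\{x\}}(\tilde\Phi)$. Since the value function and solution multifunction for $\tilde\Phi$ coincide with $v$ and $\Phi^*$ on the neighborhood from (iii), Theorems~\ref{th:locprop}, \ref{th:locUSC111}(ii), and~\ref{th:usc} then yield the conclusions. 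A second, smaller issue: in your upper semi-continuity step you invoke \textbf{(B2)}, but condition~(iv) is \emph{not} lower semi-continuity of $\Phi$ at $x$ (Example~\ref{exa2} in the paper gives a $\Phi$ satisfying (iv) that is not lower semi-continuous at $x$); the correct tool is Theorem~\ref{th:locUSC111}(ii), which also needs $\Phi^*(x)\in\K(\Y)$ --- and that compactness comes from the truncation step, not from $\K\N$-inf-compactness on $\Gr_{\{x\}}(\Phi)$.
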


As Examples~\ref{exa1} and \ref{exa:} demonstrate,
Theorem~\ref{th:BGL} and Theorem~\ref{th: locprop1BS} do not imply
each other. Theorem~\ref{c: BergeLoc123} below generalizes the
both theorems. In order to clarify the relevance between
Theorems~\ref{th:BGL} and \ref{th: locprop1BS}, similarly to
Theorem~\ref{th:BGL}, it is natural to divide Theorem~\ref{th:
locprop1BS} into the following three statements:
\begin{itemize}
\item[{\bf (BS1)}] (Lower semi-continuity of minima).  If the function $u:\X \times \Y \to {\mathbb{R}}$ is lower semi-continuous on $\X \times \Y$, $x\in\X$, and assumptions (ii) and (iii) of Theorem~\ref{th: locprop1BS} hold, then
the value function $v$ is lower semi-continuous at $x;$ Bonnans and Shapiro \cite[pp.~290--291]{Shapiro}. \item[\textbf{(BS2)}] (Upper semi-continuity of minima). If a function $u:\X \times \Y \to {\mathbb{R}}$ is
upper semi-continuous on $\X\times\Y$, $x\in\X$, the set $\Phi^*(x)$ is compact, and assumption (iv) of Theorem~\ref{th: locprop1BS} holds, then the value function $v$ is upper semi-continuous at $x;$ Bonnans and Shapiro \cite[p.~291]{Shapiro}.
\item[\textbf{(BS3)}]  (Upper semi-continuity of the solution multifunction).  Let $u: \X \times \Y \to {\mathbb{R}}$ and $x\in \X.$ If assumptions (i)--(iv) of Theorem~\ref{th: locprop1BS} hold, then the solution multifunction
$\Phi^*:\X\to\Ss(\Y)$ is
upper semi-continuous at $x;$ Bonnans and Shapiro \cite[pp.~291]{Shapiro}.
\end{itemize}

Statement (BS1) can be derived from statement (B1) in the following
way. Let assumptions of statement (BS1) hold. Consider a new
Hausdorff state space $\tilde{\X}$ that equals to the neighborhood
of $x$ from assumption (iii) of Theorem~\ref{th: locprop1BS} endowed
with the induced topology.  Let $\tilde{\Y}=\Y.$ Consider new image
sets $\tilde{\Phi}(z)=\mathcal{D}_{u(z,\cdot)}(\lambda;\Phi(z))$,
$z\in \tilde{\X}$, which are nonempty and contained to the same compact
subset $C$ of $\tilde{\Y}$. Define the function $\tilde u$ as the
restriction of $u$ to ${\rm Gr}(\tilde{\Phi})$. Assumptions (ii) and (iii) of Theorem~\ref{th: locprop1BS}, Bonnans
and Shapiro \cite[Lemma~4.3]{Shapiro}, lower semi-continuity of
$\tilde u$ on $\tilde\X \times \tilde\Y$, and Feinberg et al.
\cite[Lemma~3.3(i)]{FKV} imply $\K\N$-inf-compactness of $\tilde u$
on ${\Gr}_{\tilde{\X}}(\tilde{\Phi})$. Therefore, statement (B1)
yields lower semi-continuity of $v$ at $x$. Statements (B2) and
(BS2) (also (B3) and (BS3) respectively) do not imply each other;
see Examples~\ref{exa2}--\ref{exa2b}.


In the rest of this section we formulate the main results of this paper.  The following theorem is more general that statement (B1), and therefore statement (BS1) follows from it.

\begin{theorem}{\rm (Lower semi-continuity of minima).}\label{th:locprop}
Let $u:{\rm Gr}_{\X}(\Phi)\subseteq \X\times\Y\to \overline{\mathbb{R}}$ and $x\in \X$. If the
function $u$ is $\K\N$-inf-compact on $\Gr_{\{x\}}(\Phi)$, then the
function $v:\X\to\overline{\mathbb{R}}$ is lower semi-continuous at
$x$ and $\Phi^*(x)$ is a nonempty compact set, if $v(x)<+\infty$,
and $\Phi^*(x)=\Phi(x)$ otherwise.
\end{theorem}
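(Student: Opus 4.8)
\emph{The plan.} Unwinding Definition~\ref{def:locF-ic} with $Z=\{x\}$, the hypothesis says precisely that (i) $u$ is lower semi-continuous at every point of $\Gr_{\{x\}}(\Phi)$, and (ii) whenever $x_i\to x$ in $\X$ and $y_i\in\Phi(x_i)$ with $\{u(x_i,y_i):i\in I\}$ bounded above, the net $\{y_i\}$ has an accumulation point in $\Phi(x)$. My plan is to derive all three conclusions from (i) and (ii) by one recurring device: build a net with $u$-values bounded above, cluster it to a point of $\Phi(x)$ using (ii), and then transport the bound to the limit using (i). First I would peel off the degenerate case $v(x)=+\infty$: then $u(x,y)\ge v(x)=+\infty$ for every $y\in\Phi(x)$ by~\eqref{eq1}, hence $u(x,y)=v(x)$ for all such $y$, so $\Phi^*(x)=\Phi(x)$; lower semi-continuity at $x$ in this case will come out of the general argument.

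\emph{Lower semi-continuity of $v$ at $x$.} Given a net $x_i\to x$ in $\X$, put $L:=\ilim_i v(x_i)$; if $L=+\infty$ there is nothing to prove, so assume $L<+\infty$ and fix a real $M>L$. I would pass to a subnet with $v(x_j)\to L$; since $v(x_j)<M$ then holds eventually, restricting to the corresponding tail (a subnet, still converging to $x$) we may assume $v(x_j)<M$ for all $j$, and for each $j$ choose $y_j\in\Phi(x_j)$ with $u(x_j,y_j)<M$, which is possible because $v(x_j)=\inf_{y\in\Phi(x_j)}u(x_j,y)<M$. Now $\{u(x_j,y_j)\}$ is bounded above by $M$, so (ii) supplies an accumulation point $y\in\Phi(x)$ of $\{y_j\}$; passing to a subnet with $y_k\to y$ we get $(x_k,y_k)\to(x,y)\in\Gr_{\{x\}}(\Phi)$ in $\X\times\Y$, whence (i) gives $u(x,y)\le\ilim_k u(x_k,y_k)\le M$. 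Therefore $v(x)\le u(x,y)\le M$, and letting $M\downarrow L$ yields $v(x)\le L=\ilim_i v(x_i)$. (In particular, when $v(x)=+\infty$ this forces $L=+\infty$, so lower semi-continuity holds there too.)

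\emph{Structure of $\Phi^*(x)$ when $v(x)<+\infty$.} Here $\Phi^*(x)=\{y\in\Phi(x):u(x,y)\le v(x)\}$ because $u(x,\cdot)\ge v(x)$ on $\Phi(x)$. For nonemptiness I would take a minimizing sequence $y_n\in\Phi(x)$ with $u(x,y_n)\to v(x)$ (its values bounded above because $v(x)<+\infty$), apply (ii) to the constant net $x_i\equiv x$ to get an accumulation point $y^*\in\Phi(x)$, and along a subnet $y_k\to y^*$ invoke (i) to get $u(x,y^*)\le\ilim_k u(x,y_k)=v(x)$; the reverse inequality $u(x,y^*)\ge v(x)$ then puts $y^*\in\Phi^*(x)$. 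For compactness I would run the same two steps on an arbitrary net $\{y_j\}\subset\Phi^*(x)$ — its $u$-values all equal $v(x)$, hence are bounded above — obtaining an accumulation point $y^*\in\Phi(x)$ with $u(x,y^*)\le v(x)$, i.e.\ $y^*\in\Phi^*(x)$; since every net in $\Phi^*(x)$ thus clusters inside $\Phi^*(x)$, the set is compact.

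\emph{Main obstacle.} The real content sits in the lower semi-continuity step, and the delicate points are bookkeeping ones: one cannot make $\varepsilon_n\to0$ choices along an arbitrary directed set, so the finite threshold $M$ has to be frozen during the net manipulations and only sent down to $L$ at the end; and, crucially, $u$ is assumed lower semi-continuous only on $\Gr_{\{x\}}(\Phi)$ — not on all of $\X\times\Y$ — so the whole argument hinges on (ii) guaranteeing that the cluster point of the chosen $y$'s actually lands in $\Phi(x)$, which is exactly what makes (i) applicable. The extended-real values are then absorbed uniformly by the $M>L$ device, once the case $v(x)=+\infty$ has been split off at the start.
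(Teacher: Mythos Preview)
Your proof is correct and follows essentially the same route as the paper: pass to a subnet realizing the $\liminf$, use condition~(ii) of Definition~\ref{def:locF-ic} to produce an accumulation point $y\in\Phi(x)$, and then invoke the lower semi-continuity~(i) at $(x,y)\in\Gr_{\{x\}}(\Phi)$ to obtain $v(x)\le\ilim_i v(x_i)$. Your fixed-threshold device (freeze $M>L$, then let $M\downarrow L$) is a clean way around the issue---which you correctly flag---of choosing $y_j$ with $|v(x_j)-u(x_j,y_j)|\to 0$ along an arbitrary directed set, and you spell out the nonemptiness and compactness of $\Phi^*(x)$ directly whereas the paper cites~\cite[Theorem~3.1]{FKV}; otherwise the two arguments coincide.
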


Observe that under conditions  of Theorem~\ref{th:locprop}, the
infimum in (\ref{eq1}) can be replaced with the minimum.  The following theorem generalizes statements (B2) and (BS2).

\begin{theorem}{\rm (Upper semi-continuity of minima).}\label{th:locUSC111}
Let $u:{\rm Gr}_{\X}(\Phi)\subseteq \X\times\Y\to \overline{\mathbb{R}}$ and $x\in \X$. Each of
the following assumptions:
\begin{itemize}
\item[(i)] the function $u:{\rm Gr}_{\X}(\Phi)\subseteq \X\times\Y\to \overline{\mathbb{R}}$ is
upper semi-continuous at all $(x,y)\in \Gr_{\{x\}}(\Phi)$ and $\Phi:
\X \to \Ss(\Y)$ is a lower semi-continuous set-valued mapping at
$x$;
\item[(ii)] $\Phi^*(x)\in \K(\Y)$, the function $u:{\rm Gr}_{\X}(\Phi)\subseteq \X\times\Y\to \overline{\mathbb{R}}$ is
upper semi-continuous at all $(x,y)\in \Gr_{\{x\}}(\Phi^*)$, and
assumption (iv) of Theorem~\ref{th: locprop1BS} holds;
\end{itemize}
implies that the function
$v:\X\to\overline{\mathbb{R}}$ is upper semi-continuous at $x$.
\end{theorem}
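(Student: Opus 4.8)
\emph{Approach.} In both cases the plan is to verify the neighbourhood form of upper semi-continuity of $v$ at $x$: for every real $\lambda>v(x)$, to exhibit a neighbourhood $\oo(x)$ of $x$ with $v(x^*)<\lambda$ for all $x^*\in\oo(x)$. This is enough, since any net $x_i\to x$ is eventually in $\oo(x)$, so $\slim_i v(x_i)\le\lambda$, and then letting $\lambda\downarrow v(x)$ gives $\slim_i v(x_i)\le v(x)$; the cases $v(x)=+\infty$ (trivial) and $v(x)=-\infty$ ($\lambda$ running over all of $\R$) are handled the same way. I would also first record the corresponding reformulation of upper semi-continuity of $u$ at a point $(x,y_0)\in\Gr_{\{x\}}(\Phi)$: for each real $\lambda>u(x,y_0)$ there are open sets $U\ni x$ and $V\ni y_0$ with $u(x',y')<\lambda$ for every $(x',y')\in\Gr_\X(\Phi)\cap(U\times V)$; this is the standard net-versus-neighbourhood equivalence, applied on the domain $\Gr_\X(\Phi)$ of $u$.

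\emph{Case (i).} Given $\lambda>v(x)$, I would pick $y_0\in\Phi(x)$ with $u(x,y_0)<\lambda$ (possible since $v(x)=\inf_{y\in\Phi(x)}u(x,y)$ and $\Phi(x)\ne\emptyset$), take $U,V$ from the reformulation above at $(x,y_0)$, and then use lower semi-continuity of $\Phi$ at $x$ applied to the open set $V$ (which meets $\Phi(x)$ at $y_0$) to obtain a neighbourhood $\oo(x)$ of $x$ with $\Phi(x^*)\cap V\ne\emptyset$ for all $x^*\in\oo(x)$ (here $\Dom\Phi=\X$). For $x^*\in\oo(x)\cap U$, any $y^*\in\Phi(x^*)\cap V$ lies in $\Gr_\X(\Phi)\cap(U\times V)$, so $v(x^*)\le u(x^*,y^*)<\lambda$.

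\emph{Case (ii).} Given $\lambda>v(x)$, note $u(x,y)=v(x)<\lambda$ for each $y\in\Phi^*(x)$. For each such $y$ I would take $U_y\ni x$ and $V_y\ni y$ from the reformulation (using upper semi-continuity of $u$ at $(x,y)\in\Gr_{\{x\}}(\Phi^*)$), use compactness of $\Phi^*(x)$ to extract a finite subcover $V_{y_1},\dots,V_{y_n}$ of $\Phi^*(x)$, and set $G=\bigcup_{m=1}^n V_{y_m}$ and $U=\bigcap_{m=1}^n U_{y_m}$, so that $u<\lambda$ on $\Gr_\X(\Phi)\cap(U\times G)$. Applying assumption (iv) of Theorem~\ref{th: locprop1BS} to the neighbourhood $G$ of $\Phi^*(x)$ gives a neighbourhood $\oo(x)$ of $x$ with $\Phi(x^*)\cap G\ne\emptyset$ for $x^*\in\oo(x)$; then for $x^*\in\oo(x)\cap U$ and any $y^*\in\Phi(x^*)\cap G$ we get $v(x^*)\le u(x^*,y^*)<\lambda$.

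\emph{Main obstacle.} Everything except one step is bookkeeping with neighbourhoods and the definition of the infimum. The step that really uses the hypotheses of case (ii) is the passage from pointwise upper semi-continuity of $u$ along the compact slice $\Gr_{\{x\}}(\Phi^*)$ to the uniform bound $u<\lambda$ on $\Gr_\X(\Phi)\cap(U\times G)$; this is precisely the finite-subcover argument, and it is also the only place where assumption (iv) is invoked. A secondary point to watch is keeping the two reformulations correct for $\overline{\mathbb{R}}$-valued $u$ and $v$ and for the degenerate values $v(x)=\pm\infty$.
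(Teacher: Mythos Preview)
Your proposal is correct and, for case~(ii), essentially identical to the paper's argument: both fix $\lambda>v(x)$, use upper semi-continuity of $u$ at each $(x,y)\in\Gr_{\{x\}}(\Phi^*)$ to produce product neighbourhoods on which $u<\lambda$, extract a finite subcover by compactness of $\Phi^*(x)$, and then invoke assumption~(iv) of Theorem~\ref{th: locprop1BS}. For case~(i) there is a minor stylistic difference: the paper works with a net $x_\alpha\to x$ and the net characterization of lower semi-continuity of $\Phi$ (for each $y\in\Phi(x)$ there exist $y_\alpha\in\Phi(x_\alpha)$ with $y_\alpha\to y$), then bounds $\slim_\alpha v(x_\alpha)\le u(x,y)$ for every $y\in\Phi(x)$ and takes the infimum, whereas you stay with the neighbourhood formulation throughout and pick a single near-minimizer $y_0$; the two arguments are equivalent reformulations of the same idea.
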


According to Example~\ref{exa2}, assumptions (i) and (ii) of Theorem~\ref{th:locUSC111} do not imply each other. Assumption (i) implies assumption (ii) in Theorem~\ref{th:locUSC111}, if $\Phi^*(x)\in \K(\Y)$ because lower semi-continuity of $\Phi$ at $x$ with $\Phi^*(x)\in\K(\Y)$ yields assumption (iv) of Theorem~\ref{th: locprop1BS}. Moreover, if  $\Phi^*(x)\in \K(\Y)$, then assumption (iv) of Theorem~\ref{th:locUSC111}  follows from
 upper semi-continuity as well as from lower semi-continuity of $\Phi^*$ at $x$.  The following theorem generalizes statement~(B3).

\begin{theorem} {\rm (Upper semi-continuity of the solution multifunction).} \label{th:usc}
Let  $u:{\rm Gr}_{\X}(\Phi)\subseteq \X\times\Y\to \overline{\mathbb{R}}$ and $x\in \X$. If $u$
is $\K\N$-inf-compact on $\graph_{\{x\}}(\Phi)$, the value function
$v:\X\to \overline{\mathbb{R}}$ is continuous at $x$, and
$v(x)<+\infty$, then $\Phi^*(x)\in\K(\Y)$ and $\Phi^*$ is upper
semi-continuous at $x$.
\end{theorem}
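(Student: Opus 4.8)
The plan is to prove that $\Phi^*(x)$ is compact and that $\Phi^*$ is upper semi-continuous at $x$ directly from the definition of upper semi-continuity of set-valued maps, combined with $\K\N$-inf-compactness on $\Gr_{\{x\}}(\Phi)$ and continuity of $v$ at $x$.

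\medskip

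\textbf{Step 1: $\Phi^*(x)\in\K(\Y)$.} Since $v(x)<+\infty$ and $u$ is $\K\N$-inf-compact on $\Gr_{\{x\}}(\Phi)$, Theorem~\ref{th:locprop} directly gives that $\Phi^*(x)$ is a nonempty compact subset of $\Y$. (Equivalently one can argue: pick any $\lambda > v(x)$; the set $\mathcal{D}_{u(x,\cdot)}(\lambda;\Phi(x))$ contains $\Phi^*(x)$, is nonempty, and by part (i) of Definition~\ref{def:locF-ic} together with part (ii) applied to constant nets $x_i\equiv x$ it is closed and relatively compact, hence compact; then $\Phi^*(x)$ is closed inside it by lower semi-continuity of $u(x,\cdot)$, hence compact.)

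\medskip

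\textbf{Step 2: Setup for upper semi-continuity.} Fix a neighborhood $\mathcal{G}$ of $\Phi^*(x)$. Suppose, for contradiction, that there is no neighborhood $\oo(x)$ of $x$ with $\Phi^*(x^*)\subseteq\mathcal{G}$ for all $x^*\in\oo(x)$. Then there is a net $\{x_i\}_{i\in I}$ converging to $x$ and points $y_i\in\Phi^*(x_i)\setminus\mathcal{G}$, i.e. $u(x_i,y_i)=v(x_i)$ and $y_i\notin\mathcal{G}$. Here the index set $I$ is the directed set of neighborhoods of $x$ (ordered by reverse inclusion), and $x_i$ is chosen in the $i$-th neighborhood.

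\medskip

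\textbf{Step 3: Extract an accumulation point and derive a contradiction.} Since $v$ is continuous at $x$ and $v(x)<+\infty$, the set $\{v(x_i):i\in I\}=\{u(x_i,y_i):i\in I\}$ is eventually bounded above (by any $\lambda>v(x)$), so after passing to a subnet we may assume it is bounded above by such a $\lambda$. By condition (ii) of $\K\N$-inf-compactness on $\Gr_{\{x\}}(\Phi)$, the net $\{y_i\}$ has an accumulation point $y\in\Phi(x)$; pass to a further subnet $\{y_j\}_{j\in J}$ with $y_j\to y$, and correspondingly $x_j\to x$. By lower semi-continuity of $u$ at $(x,y)\in\Gr_{\{x\}}(\Phi)$ (condition (i)) and continuity of $v$ at $x$,
\begin{equation*}
u(x,y)\le\ilim_j u(x_j,y_j)=\ilim_j v(x_j)=v(x),
\end{equation*}
and since $y\in\Phi(x)$ we have $u(x,y)\ge v(x)$, whence $u(x,y)=v(x)$, i.e. $y\in\Phi^*(x)$. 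But $\mathcal{G}$ is a neighborhood of $\Phi^*(x)\ni y$, so $y_j\in\mathcal{G}$ eventually, contradicting $y_j\notin\mathcal{G}$ for all $j$. This contradiction establishes upper semi-continuity of $\Phi^*$ at $x$.

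\medskip

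The main obstacle is the careful handling of nets: one must pass to subnets twice (first to arrange the boundedness above of $\{u(x_i,y_i)\}$, then to convert the accumulation point $y$ into an honest limit along a subnet) while keeping the accompanying $x$-net converging to $x$, so that lower semi-continuity of $u$ at $(x,y)$ can legitimately be applied. Everything else is a routine combination of the definitions and Theorem~\ref{th:locprop}.
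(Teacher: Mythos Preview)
Your proof is correct and follows essentially the same approach as the paper's own proof: both invoke Theorem~\ref{th:locprop} for compactness of $\Phi^*(x)$, set up the same contradiction via a net $\{(x_i,y_i)\}$ with $y_i\in\Phi^*(x_i)\setminus\mathcal{G}$, use continuity of $v$ to bound $\{u(x_i,y_i)\}$ above, extract an accumulation point $y\in\Phi(x)$ via condition~(ii) of $\K\N$-inf-compactness, and conclude $y\in\Phi^*(x)$ from condition~(i) together with continuity of $v$. The only cosmetic difference is that you explicitly pass to subnets (once for boundedness, once to make $y$ a limit), whereas the paper argues directly that the accumulation point $y$ must lie in the closed complement $\mathcal{G}^c$; your more explicit bookkeeping is arguably cleaner.
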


Being combined, Theorems~\ref{th:locprop} and \ref{th:locUSC111} provide
 sufficient conditions for the continuity of $v$ at $x$. Note that $\K\N$-inf-compactness of $u$ on $\graph_{\{x\}}(\Phi)$ in Theorem~\ref{th:usc} cannot be weakened to lower semi-continuity of $u$ on ${\rm Gr}_{\X}(\Phi)$, inf-compactness of $u(x,\cdot)$ on $\Phi(x)$, and the assumptions that $\Phi:\X\to\K(\Y)$ and $\Gr_\X(\Phi)$ is closed   in $\X\times\Y;$ see Example~\ref{exa3}. We also remark  that the continuity  of $v$ at $x$ is the essential assumption in Theorem~\ref{th:usc}; see Example~\ref{exa4}.

\begin{theorem} {\rm(Local optimum theorem).} \label{thm: BergeLoc}
Let  $u:{\rm Gr}_{\X}(\Phi)\subseteq \X\times\Y\to \overline{\mathbb{R}}$ and $x\in \X$ satisfy
the following properties:
\begin{itemize}
\item[{(a)}] $u(x,y)<+\infty$ for some $y\in \Phi(x)$;
\item[{(b)}] $u$ is $\K\N$-inf-compact on $\Gr_{\{x\}}(\Phi)$;
\end{itemize}
then $\Phi^*(x)\in\K(\Y).$  If, in addition,
 $u:{\rm Gr}_{\X}(\Phi)\subseteq \X\times\Y\to \overline{\mathbb{R}}$ is upper semi-continuous at all $(x,y)\in{\rm Gr}_{\{x\}}(\Phi^*)$,
then the following two assumptions are equivalent:
\begin{itemize}
\item[{\rm(i)}] assumption (iv) of Theorem~\ref{th: locprop1BS};
\item[{\rm(ii)}] the solution multifunction $\Phi^*:\X\to 2^\Y$ is
upper semi-continuous at $x$;
\end{itemize}
and each of them implies that $v$ is continuous at $x$.
\end{theorem}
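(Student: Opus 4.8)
The plan is to obtain the whole statement from Theorems~\ref{th:locprop}, \ref{th:locUSC111} and~\ref{th:usc} together with the remark following Theorem~\ref{th:locUSC111}; there is no genuinely new argument here, only the right ordering of the implications, so that the auxiliary fact ``$v$ is continuous at $x$'' is produced \emph{before} it is used. First I would dispose of the unconditional conclusion: by (a) there is $y\in\Phi(x)$ with $u(x,y)<+\infty$, so $v(x)=\inf_{y'\in\Phi(x)}u(x,y')\le u(x,y)<+\infty$; and by (b), $u$ is $\K\N$-inf-compact on $\Gr_{\{x\}}(\Phi)$, so Theorem~\ref{th:locprop} applies and, since $v(x)<+\infty$, yields both $\Phi^*(x)\in\K(\Y)$ and lower semi-continuity of $v$ at $x$. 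The latter will be reused below.

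For the ``if, in addition'' part I would also assume that $u$ is upper semi-continuous at every point of $\Gr_{\{x\}}(\Phi^*)$, and prove the cyclic chain
\[
\mathrm{(i)}\ \Longrightarrow\ [\,v\text{ continuous at }x\,]\ \Longrightarrow\ \mathrm{(ii)}\ \Longrightarrow\ \mathrm{(i)}
\]
For $\mathrm{(i)}\Rightarrow[\,v\text{ continuous at }x\,]$: $v$ is lower semi-continuous at $x$ by the above; and since $\Phi^*(x)\in\K(\Y)$, $u$ is upper semi-continuous at every point of $\Gr_{\{x\}}(\Phi^*)$, and (i) is precisely assumption~(iv) of Theorem~\ref{th: locprop1BS}, assumption~(ii) of Theorem~\ref{th:locUSC111} is satisfied, so by that theorem $v$ is upper semi-continuous at $x$; hence $v$ is continuous at $x$. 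For $[\,v\text{ continuous at }x\,]\Rightarrow\mathrm{(ii)}$: I would invoke Theorem~\ref{th:usc}, whose three hypotheses --- namely hypothesis~(b), continuity of $v$ at $x$, and $v(x)<+\infty$ --- are all in force, to conclude that $\Phi^*$ is upper semi-continuous at $x$, which is (ii). For $\mathrm{(ii)}\Rightarrow\mathrm{(i)}$: since $\Phi^*(x)\in\K(\Y)$, the remark following Theorem~\ref{th:locUSC111} shows that upper semi-continuity of $\Phi^*$ at $x$ implies assumption~(iv) of Theorem~\ref{th: locprop1BS}, i.e.\ (i).

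Closing the chain gives $\mathrm{(i)}\Leftrightarrow\mathrm{(ii)}$, and each of them implies continuity of $v$ at $x$: (i) does so along the first arrow of the chain, and (ii) does so through $\mathrm{(ii)}\Rightarrow\mathrm{(i)}$. I do not expect a serious obstacle, since every step but one is a direct citation of an earlier result. The single exception is $\mathrm{(ii)}\Rightarrow\mathrm{(i)}$, which is the content of the remark following Theorem~\ref{th:locUSC111}: one unwinds the definition of upper semi-continuity of $\Phi^*$ at $x$ and checks that, for a prescribed neighborhood $\oo(\Phi^*(x))$ of the nonempty compact set $\Phi^*(x)$, the resulting neighborhood $\oo(x)$ of $x$ satisfies $\oo(\Phi^*(x))\cap\Phi(x^*)\ne\emptyset$ for every $x^*\in\oo(x)$. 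This is the only place where the compactness of $\Phi^*(x)$ is genuinely used, and if one chooses to reprove the remark rather than cite it, carrying this verification out carefully is the main (and still minor) technical point.
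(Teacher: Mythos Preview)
Your proposal is correct and follows essentially the same route as the paper's proof: both obtain $\Phi^*(x)\in\K(\Y)$ and lower semi-continuity of $v$ at $x$ from Theorem~\ref{th:locprop}, pass from (i) to (ii) via Theorem~\ref{th:locUSC111}(ii) and Theorem~\ref{th:usc} (with continuity of $v$ at $x$ as the intermediate step), and handle (ii)$\Rightarrow$(i) by unwinding the definition of upper semi-continuity of $\Phi^*$ at $x$. Your cyclic chain $(\mathrm{i})\Rightarrow[v\text{ continuous}]\Rightarrow(\mathrm{ii})\Rightarrow(\mathrm{i})$ is only a cosmetic reorganisation of the paper's argument; the paper first establishes $(\mathrm{i})\Leftrightarrow(\mathrm{ii})$ and then deduces continuity of $v$, but its proof of $(\mathrm{i})\Rightarrow(\mathrm{ii})$ passes through continuity of $v$ exactly as yours does. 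One small inaccuracy in your commentary: compactness of $\Phi^*(x)$ is not used only in $(\mathrm{ii})\Rightarrow(\mathrm{i})$ --- it is already needed to verify hypothesis~(ii) of Theorem~\ref{th:locUSC111} in your step $(\mathrm{i})\Rightarrow[v\text{ continuous}]$.
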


Observe that assumptions of Theorem~\ref{thm: BergeLoc} include conditions on
$y^*\in \Phi(x^*)$, when $u(x^*,y^*)\ge \lambda > v(x)$, $x^*\in \oo(x)$, where $\oo(x)$ is some neighborhood of $x$. These values of $y^*$ do not affect the properties of $v$ and $\Phi^*$ at $x$. In order to obtain a more delicate result, we introduce the sets $\Phi_{\lambda,x}$ and function~$u_{\lambda,x}$.

For $\lambda\in\R$, $x\in\X$, and $\Phi:\X\to\Ss(\Y),$ define
$\Phi_{\lambda,x}:\X\to 2^\Y,$
\[
\Phi_{\lambda,x}(z):=\begin{cases} \{y\in\Phi(z)\,:\, u(z,y)\le
\lambda\}, &\mbox{if either this set is not empty or $z= x$;} \\
\Phi_{\lambda,x}(x), & {\rm otherwise.}\end{cases}\]  
As follows from this definition, $\Phi_{\lambda,x}(x)=\emptyset$ if and only if $u(x,y)>\lambda$ for all $y\in\Phi(x).$ In particular, if $\lambda<v(x)$, then $\Phi_{\lambda,x}(x)=\emptyset.$   
We remark that $\Phi_{\lambda,x}:\X\to\Ss(\Y)$ if and only if
$\Phi_{\lambda,x}(x)\ne\emptyset$. If
$\Phi_{\lambda,x}(x)\ne\emptyset$, define the functions
$u_{\lambda,x}:\X\times\Y\to \R,$
\[
u_{\lambda,x}(z,y):=\begin{cases} u(z,y), &\mbox{if }\{y\in\Phi(z)\,:\,
u(z,y)\le \lambda\}\ne\emptyset;\\
u(x,y), &\mbox{otherwise};\end{cases}
\]
for each $z\in \X$ and $y\in \Y$, and $v_{\lambda,x}:\X\to\R,$
\[
v_{\lambda,x}(z):=\inf\limits_{y\in {\Phi_{\lambda,x}}(z)}u_{\lambda,x}(z,y),\qquad 
z\in\X.
\]
Consider the solution multifunction
\begin{equation*}
{\Phi}^*_{\lambda,x}(z):=\left\{y\in
{\Phi_{\lambda,x}}(z):\,v_{\lambda,x}(z)=u_{\lambda,x}(z,y)\right\},\quad z\in\X.
\end{equation*}
 If $\lambda>v(x)$, then $v_{\lambda,x}(x)=v(x)$ and
$\Phi^*_{\lambda,x}(x)=\Phi^*(x)$.  If the
conditions of Theorem~\ref{th:locprop} hold, the latter is true
for $\lambda\ge v(x).$
Observe that, under condition (iii) of Theorem~\ref{th:
locprop1BS}, $v_{\lambda,x}(x^*)=v(x^*)$ and
$\Phi_{\lambda,x}^*(x^*)=\Phi^*(x^*)$ for all $x^*$ in a neighborhood of
$x$.

We remark that, if $u$ is $\K\N$-inf-compact on $\Gr_{\{x\}}(\Phi)$,
then $u_{\lambda,x}$ is $\K\N$-inf-compact on
$\Gr_{\{x\}}(\Phi_{\lambda,x})$. The inverse claim does not hold in
general; see Example~\ref{exa1}. The following corollary from
Theorem~\ref{th:locprop} generalizes
 statements (B1) and (BS1).

\begin{corollary}{\rm (Lower semi-continuity of minima).}\label{cor:locprop}
Let $u:{\rm Gr}_{\X}(\Phi)\subseteq \X\times\Y\to \overline{\mathbb{R}}$, $x\in\X$, and
$\lambda\in\R$ satisfy $u(x,y)\le\lambda$ for some $y\in\Phi(x).$ If
the function $u_{\lambda,x}:\Gr_\X(\Phi_{\lambda,x})\subseteq\X\times\Y\to
\overline{\mathbb{R}}$ is $\K\N$-inf-compact on
$\Gr_{\{x\}}(\Phi_{\lambda,x})$, then the function
$v:\X\to\overline{\mathbb{R}}$ is lower semi-continuous at $x$ and
$\Phi^*(x)\in\K(\Y)$.
\end{corollary}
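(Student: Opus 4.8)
The plan is to obtain the corollary from Theorem~\ref{th:locprop} applied to the auxiliary data $u_{\lambda,x}$ and $\Phi_{\lambda,x}$ in place of $u$ and $\Phi$. Since $u(x,y)\le\lambda$ for some $y\in\Phi(x)$, the set $\Phi_{\lambda,x}(x)=\{y\in\Phi(x):u(x,y)\le\lambda\}$ is nonempty; hence $\Phi_{\lambda,x}:\X\to\Ss(\Y)$ and the objects $u_{\lambda,x}$, $v_{\lambda,x}$, $\Phi^*_{\lambda,x}$ are well defined, and $v(x)\le\lambda$. First I would record the base-point identities: because the defining condition for $u_{\lambda,x}$ holds at $z=x$, we have $u_{\lambda,x}(x,\cdot)=u(x,\cdot)$ on $\Y$, so $v_{\lambda,x}(x)=\inf\{u(x,y):y\in\Phi(x),\,u(x,y)\le\lambda\}$, and since $v(x)\le\lambda$ this infimum equals $v(x)$ (any $y$ with $u(x,y)$ close enough to $v(x)$ satisfies $u(x,y)\le\lambda$); likewise $\Phi^*_{\lambda,x}(x)=\Phi^*(x)$.

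The second step is to apply Theorem~\ref{th:locprop} to $u_{\lambda,x}$ and $\Phi_{\lambda,x}$: by hypothesis $u_{\lambda,x}$ is $\K\N$-inf-compact on $\Gr_{\{x\}}(\Phi_{\lambda,x})$, and $v_{\lambda,x}(x)=v(x)\le\lambda<+\infty$, so the theorem yields that $v_{\lambda,x}$ is lower semi-continuous at $x$ and that $\Phi^*_{\lambda,x}(x)$ is a nonempty compact set. Combined with the identity above, $\Phi^*(x)=\Phi^*_{\lambda,x}(x)\in\K(\Y)$, which settles one of the two assertions.

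The remaining task is lower semi-continuity of $v$ at $x$, and the crux is the pointwise estimate
\[
v(z)\ \ge\ \min\{v_{\lambda,x}(z),\,\lambda\}\qquad\text{for every }z\in\X .
\]
I would prove it using the dichotomy in the definition of $\Phi_{\lambda,x}$. If $\{y\in\Phi(z):u(z,y)\le\lambda\}\ne\emptyset$ (which always holds when $z=x$), then $\Phi_{\lambda,x}(z)$ is exactly this set and $u_{\lambda,x}(z,\cdot)=u(z,\cdot)$, so, as at the base point, $v(z)\le\lambda$ and $v_{\lambda,x}(z)=v(z)$; if instead this set is empty, then $z\ne x$ and $u(z,y)>\lambda$ for all $y\in\Phi(z)$, whence $v(z)\ge\lambda$. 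In either case the displayed inequality holds.

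Finally, for an arbitrary net $z_i\to x$ in $\X$ I would combine the pointwise estimate with the lower semi-continuity of $v_{\lambda,x}$ at $x$ and the fact that the truncation $t\mapsto\min\{t,\lambda\}$, being continuous and nondecreasing on $\overline{\R}$, commutes with $\ilim$ along nets:
\[
\ilim_i v(z_i)\ \ge\ \ilim_i\min\{v_{\lambda,x}(z_i),\lambda\}\ =\ \min\{\ilim_i v_{\lambda,x}(z_i),\,\lambda\}\ \ge\ \min\{v_{\lambda,x}(x),\lambda\}\ =\ \min\{v(x),\lambda\}\ =\ v(x),
\]
the last equality because $v(x)\le\lambda$. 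Hence $v$ is lower semi-continuous at $x$. The one place requiring care is the bookkeeping in the ``otherwise'' branch of $\Phi_{\lambda,x}$, where $v_{\lambda,x}(z)$ can be as small as $v(x)$ while $v(z)$ is much larger; the truncation at $\lambda$ in the pointwise estimate is precisely what absorbs this gap, and I expect verifying that estimate to be the \emph{main} (albeit minor) obstacle of the argument.
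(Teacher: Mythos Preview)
Your argument is correct and follows essentially the same route as the paper: apply Theorem~\ref{th:locprop} to the auxiliary pair $(u_{\lambda,x},\Phi_{\lambda,x})$, identify $v_{\lambda,x}(x)=v(x)$ and $\Phi^*_{\lambda,x}(x)=\Phi^*(x)$, and then compare $v$ with $v_{\lambda,x}$ via the dichotomy ``level set at $z$ nonempty / empty.'' One small simplification: in the empty case $v_{\lambda,x}(z)=v(x)$, so in fact $v(z)\ge v_{\lambda,x}(z)$ holds for \emph{every} $z\in\X$, and the truncation at $\lambda$ is unnecessary---$\ilim_i v(z_i)\ge\ilim_i v_{\lambda,x}(z_i)\ge v_{\lambda,x}(x)=v(x)$ already gives the conclusion.
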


The following theorem generalizes  Theorems~\ref{th:BGL}, \ref{th: locprop1BS}, and \ref{thm: BergeLoc}. 

\begin{theorem}\label{c: BergeLoc123}{\rm (Local optimum theorem).}
Let $u:{\rm Gr}_{\X}(\Phi)\subseteq \X\times\Y\to \overline{\mathbb{R}}$, $x\in\X$, and
$\lambda\in\R$ satisfy $u(x,y)<\lambda$ for some $y\in\Phi(x).$ If
the function $u_{\lambda,x}:\Gr_\X(\Phi_{\lambda,x})\subseteq\X\times\Y\to
\overline{\mathbb{R}}$ is $\K\N$-inf-compact on
$\Gr_{\{x\}}(\Phi_{\lambda,x})$ and the function $u:{\rm Gr}_{\X}(\Phi)\subseteq \X\times\Y\to \overline{\mathbb{R}}$ is upper semi-continuous at all $(x,y)\in{\rm
Gr}_{\{x\}}(\Phi^*)$, then the following two assumptions are
equivalent:
\begin{itemize}
\item[(i)] assumption (iv) of Theorem~\ref{th: locprop1BS};
\item[(ii)] the solution multifunction $\Phi^*:\X\to 2^\Y$ is
upper semi-continuous at $x$;
\end{itemize}
and each of them implies that $v$ is continuous at $x$.
\end{theorem}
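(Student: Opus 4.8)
The plan is to route the whole statement through the auxiliary pair $(u_{\lambda,x},\Phi_{\lambda,x})$, reducing the upper semi-continuity of $\Phi^*$ to Theorem~\ref{th:usc} while handling the two semicontinuities of $v$ by Corollary~\ref{cor:locprop} and Theorem~\ref{th:locUSC111}. First I record the base facts. The assumption $u(x,y)<\lambda$ for some $y\in\Phi(x)$ gives $v(x)<\lambda$, so in particular $u(x,y)\le\lambda$ for that $y$ and Corollary~\ref{cor:locprop} applies: the function $v$ is lower semi-continuous at $x$ and $\Phi^*(x)\in\K(\Y)$. Moreover $v(x)<\lambda$ yields the pointwise identities $v_{\lambda,x}(x)=v(x)<+\infty$ and $\Phi^*_{\lambda,x}(x)=\Phi^*(x)$, which will be the bridge between the auxiliary and the original problem.

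For the implication (ii)$\Rightarrow$(i) I would argue directly from the observation stated after Theorem~\ref{th:locUSC111}: since $\Phi^*(x)\in\K(\Y)$, upper semi-continuity of $\Phi^*$ at $x$ already yields assumption (iv) of Theorem~\ref{th: locprop1BS}, which is exactly (i).

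For (i)$\Rightarrow$(ii), and for the implication to continuity of $v$, I would first upgrade (i) to continuity of $v$ at $x$ and then transport everything through the auxiliary problem. Under (i) the hypotheses of Theorem~\ref{th:locUSC111}(ii) hold, namely $\Phi^*(x)\in\K(\Y)$, upper semi-continuity of $u$ at all $(x,y)\in\Gr_{\{x\}}(\Phi^*)$, and assumption (iv); hence $v$ is upper semi-continuous at $x$, and with the lower semi-continuity from Corollary~\ref{cor:locprop} it is continuous at $x$. Because $v(x)<\lambda$ and $v$ is upper semi-continuous at $x$, there is a neighborhood $\oo(x)$ with $v(x^*)<\lambda$ for all $x^*\in\oo(x)$; consequently the sets $\{y\in\Phi(x^*):u(x^*,y)\le\lambda\}$ are nonempty on $\oo(x)$, the ``fill-in'' branch in the definitions of $\Phi_{\lambda,x}$ and $u_{\lambda,x}$ is inactive there, and therefore $v_{\lambda,x}=v$ and $\Phi^*_{\lambda,x}=\Phi^*$ throughout $\oo(x)$. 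Now I apply Theorem~\ref{th:usc} to the auxiliary data: $u_{\lambda,x}$ is $\K\N$-inf-compact on $\Gr_{\{x\}}(\Phi_{\lambda,x})$ by hypothesis, $v_{\lambda,x}=v$ is continuous at $x$, and $v_{\lambda,x}(x)<+\infty$; hence $\Phi^*_{\lambda,x}$ is upper semi-continuous at $x$. Since upper semi-continuity at $x$ depends only on the values on $\oo(x)$, where $\Phi^*_{\lambda,x}=\Phi^*$, the map $\Phi^*$ is upper semi-continuous at $x$, i.e. (ii). This closes (i)$\Leftrightarrow$(ii), and since (ii)$\Rightarrow$(i)$\Rightarrow[v\text{ continuous at }x]$, each of (i),(ii) implies continuity of $v$ at $x$.

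The step I expect to be the main obstacle is precisely the passage from the pointwise identities $v_{\lambda,x}(x)=v(x)$ and $\Phi^*_{\lambda,x}(x)=\Phi^*(x)$ to their validity on a whole neighborhood of $x$. On the region where the true sublevel set $\{y\in\Phi(x^*):u(x^*,y)\le\lambda\}$ is empty, $\Phi_{\lambda,x}$ is frozen to the constant value $\Phi_{\lambda,x}(x)$ and $u_{\lambda,x}(x^*,\cdot)=u(x,\cdot)$, so the auxiliary problem is completely decoupled from $\Phi(x^*)$ and nothing derived for $(u_{\lambda,x},\Phi_{\lambda,x})$ can be transported back to $v$ or $\Phi^*$ there. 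Excluding this region near $x$ is exactly what upper semi-continuity of $v$ at $x$ (together with $v(x)<\lambda$) delivers, and it is the hinge of the whole transport; if one instead prefers to route (i)$\Rightarrow$(ii) through Theorem~\ref{thm: BergeLoc} rather than Theorem~\ref{th:usc}, the same neighborhood is what is needed to transfer upper semi-continuity of $u$ from $\Gr_{\{x\}}(\Phi^*)$ to $\Gr_{\{x\}}(\Phi^*_{\lambda,x})$, via a product-neighborhood argument that lets the fill-in values $u(x,\cdot)$ inherit the bound from $u$.
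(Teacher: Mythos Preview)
Your proof is correct and follows essentially the same route as the paper: Corollary~\ref{cor:locprop} for lower semi-continuity of $v$ and compactness of $\Phi^*(x)$, Theorem~\ref{th:locUSC111}(ii) for upper semi-continuity of $v$ under (i), and Theorem~\ref{th:usc} applied to $(u_{\lambda,x},\Phi_{\lambda,x})$ for the upper semi-continuity of $\Phi^*$. The only difference is in how the neighborhood $\oo(x)$ on which $v_{\lambda,x}=v$ and $\Phi^*_{\lambda,x}=\Phi^*$ is produced: the paper extracts it directly from upper semi-continuity of $u$ at points of $\Gr_{\{x\}}(\Phi^*)$ (together with assumption (iv), though the phrasing there leaves this implicit), whereas you obtain it one step later from the already-established upper semi-continuity of $v$ at $x$ and $v(x)<\lambda$; your ordering makes the dependence on (i) explicit and is slightly cleaner, but the substance is the same.
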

\begin{remark}
Upper semi-continuity of $u:{\rm Gr}_{\X}(\Phi)\subseteq \X\times\Y\to \overline{\mathbb{R}}$
at all $(x,y)\in{\rm Gr}_{\{x\}}(\Phi^*)$ in Theorem~\ref{c:
BergeLoc123} cannot be relaxed to upper semi-continuity of
$u_{\lambda,x}:\Gr_\X(\Phi_{\lambda,x})\subseteq\X\times\Y\to \overline{\mathbb{R}}$ at
all $(x,y)\in{\rm Gr}_{\{x\}}(\Phi^*)$; see Example~\ref{exa:last}.
\end{remark}
\begin{remark}\label{rem4}
Theorems~\ref{th: locprop1BS}, \ref{th:locUSC111}, \ref{thm: BergeLoc}, and \ref{c: BergeLoc123} have the common
assumption: condition (iv) of Theorem~\ref{th: locprop1BS}.
 The
remaining assumptions are weaker in Theorem~\ref{c: BergeLoc123}
than in Theorem~\ref{th: locprop1BS}. Indeed, conditions (ii) and
(iii) of Theorem~\ref{th: locprop1BS} imply that $\Phi_{\lambda,x}$
is a closed mapping that acts from a closed neighborhood of $x$, say
$\overline{\oo}(x)$, into the compact set $C$. Therefore, Berge
\cite[Corollary, p.~112]{Berge} yields that
$\Phi_{\lambda,x}:\overline{\oo}(x)\to \Ss(\Y)$ is upper
semi-continuous, compact-valued, $v_{\lambda,x}(x^*)=v(x^*)$ and
$\Phi_{\lambda,x}^*(x^*)=\Phi^*(x^*)$ for all $x^*$ in a
neighborhood of $x$. Moreover, condition (i) of Theorem~\ref{th:
locprop1BS} and Lemma~\ref{lm0} imply that the function
$u_{\lambda,x}:\Gr_{\overline{\oo}(x)}(\Phi_{\lambda,x})\subseteq\X\times\Y\to
\overline{\mathbb{R}}$ is $\K\N$-inf-compact on
$\Gr_{\{x\}}(\Phi_{\lambda,x})$ and the function
$u:\Gr_{\overline{\oo}(x)}(\Phi)\subseteq\X\times\Y\to \overline{\mathbb{R}}$ is upper
semi-continuous at all $(x,y)\in{\rm Gr}_{\{x\}}(\Phi^*)$.
\end{remark}

\section{Local Properties of $\K\N$-inf-compact Functions}


\begin{lemma}\label{lm0}
Let $Z\subseteq\X$ be a nonempty set. If $u:{\rm Gr}_{\X}(\Phi)\subseteq \X\times\Y\to \overline{\mathbb{R}}$ is lower
semi-continuous at all $(x,y)\in{\rm
Gr}_{Z}(\Phi)$ and ${\Phi}:\X\to
\K(\Y)$ is upper semi-continuous at all $x\in Z$, then the function
$u(\cdot,\cdot)$ is $\K\N$-inf-compact on ${\rm Gr}_{Z}
({{\Phi}})$;
\end{lemma}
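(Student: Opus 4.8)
The plan is to verify directly the two conditions of $\K\N$-inf-compactness on $\Gr_{Z}(\Phi)$ from Definition~\ref{def:locF-ic}. Condition~(i) — lower semi-continuity of $u$ at every point of $\Gr_{Z}(\Phi)$ — is literally one of the hypotheses of the lemma, so there is nothing to prove there. All the work goes into condition~(ii): given a net $\{x_i\}_{i\in I}$ in $\X$ converging to some $x\in Z$ and a net $\{y_i\}_{i\in I}$ on the same directed set $I$ with $y_i\in\Phi(x_i)$ and $\{u(x_i,y_i):i\in I\}$ bounded above, we must exhibit an accumulation point $y$ of $\{y_i\}$ with $y\in\Phi(x)$. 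I expect that the boundedness-above hypothesis plays no role here: the conclusion follows from upper semi-continuity of $\Phi$ at $x$ together with compactness of $\Phi(x)$ alone, and this is worth recording as a remark.

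The core step is a cover-and-contradiction argument. Assume, for contradiction, that $\{y_i\}$ has no accumulation point in $\Phi(x)$. Then for each $y\in\Phi(x)$, negating the definition of a cluster point of a net, there is an open neighborhood $V_y$ of $y$ and an index $i_y\in I$ such that $y_j\notin V_y$ for all $j\ge i_y$. The family $\{V_y:y\in\Phi(x)\}$ is an open cover of the compact set $\Phi(x)$; extract a finite subcover $V_{y_1},\dots,V_{y_n}$ and put $\oo:=\bigcup_{k=1}^{n}V_{y_k}$, an open neighborhood of the set $\Phi(x)$. Since $I$ is directed, choose $i_0\in I$ with $i_0\ge i_{y_k}$ for all $k=1,\dots,n$; then $y_j\notin\oo$ for every $j\ge i_0$. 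On the other hand, upper semi-continuity of $\Phi$ at $x$ yields a neighborhood $\mathcal{O}(x)$ of $x$ with $\Phi(x^*)\subseteq\oo$ for all $x^*\in\mathcal{O}(x)$; since $x_i\to x$, there is $i_1\in I$ with $x_j\in\mathcal{O}(x)$, hence $y_j\in\Phi(x_j)\subseteq\oo$, for all $j\ge i_1$. Using directedness once more, pick $j\in I$ with $j\ge i_0$ and $j\ge i_1$; then $y_j\in\oo$ and $y_j\notin\oo$, a contradiction. Therefore $\{y_i\}$ has an accumulation point in $\Phi(x)$, which establishes condition~(ii) of Definition~\ref{def:locF-ic}, and the lemma follows.

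The only delicate point — which I would flag as the main obstacle, although it is entirely routine — is the net bookkeeping. Because $\X$ and $\Y$ are merely Hausdorff (not metrizable), one must phrase ``$y$ is not an accumulation point'' in its net form rather than its sequential form, and one must invoke directedness of $I$ twice: first to amalgamate the finitely many indices $i_{y_1},\dots,i_{y_n}$ produced by the finite subcover into a single $i_0$, and then to amalgamate $i_0$ with the index $i_1$ coming from the convergence $x_i\to x$. Everything else is the standard characterization of upper semi-continuity of a compact-valued multifunction via cluster points of nets along the graph.
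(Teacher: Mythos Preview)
Your proof is correct and follows the same underlying idea as the paper's: condition~(i) of Definition~\ref{def:locF-ic} is a hypothesis, and condition~(ii) follows from upper semi-continuity of the compact-valued $\Phi$ at $x$. The only difference is in packaging: the paper invokes Aliprantis and Border \cite[Corollary~17.17]{AliBor} as a black box to obtain the accumulation point $y\in\Phi(x)$, whereas you unwind that result with an explicit finite-subcover-and-directedness argument. Your version is self-contained and makes transparent why the bound on $\{u(x_i,y_i)\}$ is irrelevant for condition~(ii); the paper's proof, by contrast, goes on to use lower semi-continuity to deduce $u(x,y)\le\lambda$, a conclusion that is not actually required by Definition~\ref{def:locF-ic} (though it does no harm).
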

\begin{proof}
The proof is similar to the proof of Feinberg at el. \cite[Lemma~3.3(i)]{FKV}.
Let $\{x_i \}_{i\in I}$ be a convergent net with values in
$\X$ whose limit $x$ belongs to $Z$ and $\{y_i\}_{i\in I}$ be a
net defined on the same ordered set $I$ with $y_i\in \Phi(x_i)$,
$i\in I,$ and satisfying the condition that the set $\{u(x_i,y_i):
i\in I \}$ is bounded above by $\lambda\in\mathbb{R}$. Let us
prove that a net $\{y_i\}_{i\in I}$ has an accumulation point
$y\in \Phi(x)$ such that $u(x,y)\le \lambda$. Aliprantis and
Border \cite[Corollary 17.17, p.~564]{AliBor} yields that a net
$\{y_i\}_{i\in I}$ has an accumulation point $y\in \Phi(x)$. The
lower semi-continuity of $u$  at all $(x,y)\in{\rm
Gr}_{Z}(\Phi)$ implies that
$u(x,y)\le \lambda$. Therefore, the function $u(\cdot,\cdot)$ is
$\K\N$-inf-compact on ${\rm Gr}_Z ({{\Phi}})$. \hfill$\Box$
\end{proof}

  When the topological spaces $\X$ and $\Y$ are metrizable, we may avoid nets in the definition of the $\K\N$-inf-compactness by replacing them with sequences.

\begin{lemma}\label{lmmetr}
Let $\X$ and $\Y$ be metrizable spaces, $Z\subseteq\X$ be a nonempty
set. Then $u:{\rm Gr}_{\X}(\Phi)\subseteq \X\times\Y\to \overline{\mathbb{R}}$ is $\K\N$-inf-compact on
${\rm Gr}_{Z}(\Phi)$ if and only if the following two conditions
hold:

(i) $u:{\rm Gr}_{\X}(\Phi)\subseteq \X\times\Y\to \overline{\mathbb{R}}$ is lower semi-continuous at
all $(x,y)\in{\rm Gr}_{Z}(\Phi)$;

(ii) if a sequence $\{x_n \}_{n=1,2,\ldots}$ with values in $\X$
converges to $x\in Z$ then each sequence
$\{y_n \}_{n=1,2,\ldots}$ with $y_n\in \Phi(x_n)$, $n=1,2,\ldots,$
satisfying the condition that the sequence $\{u(x_n,y_n)
\}_{n=1,2,\ldots}$ is bounded above, has a limit point $y\in
\Phi(x).$
\end{lemma}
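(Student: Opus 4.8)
The plan is to show that in metrizable spaces the net-based conditions of Definition~\ref{def:locF-ic} are equivalent to the sequential conditions (i) and (ii) stated in the lemma. The key observation is that in a metrizable space the following facts hold: (a) lower semi-continuity at a point can be tested using sequences rather than nets; (b) a point is an accumulation point of a sequence if and only if it is the limit of a subsequence, i.e. a ``limit point'' in the sense used in condition (ii); and (c) compactness is equivalent to sequential compactness, which will let us extract convergent subsequences when controlling the behaviour of $\{y_n\}$.

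For the forward direction, I would assume $u$ is $\K\N$-inf-compact on $\Gr_Z(\Phi)$ in the net sense. Condition (i) is immediate, since lower semi-continuity at a point via nets trivially implies lower semi-continuity via sequences (sequences are nets). For condition (ii), given sequences $\{x_n\}$ converging to $x\in Z$ and $\{y_n\}$ with $y_n\in\Phi(x_n)$ and $\{u(x_n,y_n)\}$ bounded above, regard them as nets indexed by $\N$; condition (ii) of Definition~\ref{def:locF-ic} produces an accumulation point $y\in\Phi(x)$ of the net $\{y_n\}$, and in the metrizable space $\Y$ this accumulation point is the limit of some subsequence $\{y_{n_k}\}$, hence a limit point of the sequence in the required sense.

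For the converse direction, the real work is to upgrade a statement about sequences to a statement about arbitrary nets. I would assume (i) and (ii) hold and verify the two conditions of Definition~\ref{def:locF-ic}. Condition (i) there is again automatic in a metrizable setting: lower semi-continuity at a point for functions on subsets of a metrizable space is characterized by sequences, so (i) here yields (i) there. For condition (ii) of Definition~\ref{def:locF-ic}, suppose $\{x_i\}_{i\in I}$ is a net converging to $x\in Z$, with $y_i\in\Phi(x_i)$ and $\{u(x_i,y_i):i\in I\}$ bounded above by some $\lambda\in\R$. I need to produce an accumulation point $y\in\Phi(x)$ of $\{y_i\}_{i\in I}$. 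The natural strategy is: pick a metric $d_\X$ on $\X$; for each $n$ choose, using convergence of the net, an index $i_n\in I$ with $d_\X(x_{i_n},x)<1/n$, arranged so that $i_1\le i_2\le\cdots$ by directedness of $I$. Then $x_{i_n}\to x$ as a sequence, $y_{i_n}\in\Phi(x_{i_n})$, and $\{u(x_{i_n},y_{i_n})\}$ is bounded above by $\lambda$, so condition (ii) of the lemma yields a subsequence $\{y_{i_{n_k}}\}$ converging to some $y\in\Phi(x)$. It remains to check that this $y$ is an accumulation point of the original net $\{y_i\}_{i\in I}$: given any neighbourhood $V$ of $y$ and any $i\in I$, one must find $j\ge i$ with $y_j\in V$. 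Here I would exploit that the $i_{n_k}$ are cofinal-enough combined with directedness — more precisely, for the chosen subsequence $y_{i_{n_k}}\to y$, infinitely many of the $i_{n_k}$ satisfy $y_{i_{n_k}}\in V$, and by enlarging the selected indices via directedness (choosing $i_n\ge i$ is possible for all large $n$ once we also demand cofinality in the construction) we can guarantee some such $i_{n_k}\ge i$.

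The main obstacle I anticipate is precisely this last point: a priori a sequence extracted from a net need not be cofinal in the net, so an accumulation point of the extracted sequence need not be an accumulation point of the net. The remedy is to build the index sequence $\{i_n\}$ to be simultaneously \emph{cofinal} in $I$ and to drive $x_{i_n}\to x$: since $I$ is directed and the net converges, for each $n$ and each prescribed $i\in I$ one can choose $i_n\ge i_{n-1}$, $i_n$ beyond any finite list, with $d_\X(x_{i_n},x)<1/n$. Then any accumulation point of the sequence $\{y_{i_n}\}$ is genuinely an accumulation point of the net $\{y_i\}_{i\in I}$, because cofinality ensures the ``eventually'' quantifier over $I$ is met. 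With the metrizability of $\Y$ converting accumulation points to subsequential limits, condition (ii) of the lemma applies and delivers the desired $y\in\Phi(x)$, completing the verification. A brief remark that separability or second countability is \emph{not} needed — only the first-countability implicit in metrizability of $\X$ (for the diagonal-type index selection) and of $\Y$ (for accumulation points = subsequential limits) — would round out the argument.
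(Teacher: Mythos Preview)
Your forward direction (from the net definition to the sequential conditions) is correct and cleaner than the paper's: you view the sequence as a net, apply Definition~\ref{def:locF-ic}(ii), and note that in metrizable $\Y$ an accumulation point of a sequence is a subsequential limit. The paper instead argues that the closure of $\{(x_n,y_n)\}$ in $\X\times\Y$ is compact and then extracts a convergent subsequence --- also correct, but more roundabout.

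The genuine gap is in your converse direction. Your remedy --- build $\{i_n\}$ to be cofinal in $I$ --- fails in general, because a directed set need not admit any countable cofinal subset. Take $I=\omega_1$ with its usual order and the constant net $x_\alpha\equiv x$: every increasing sequence $i_1\preceq i_2\preceq\cdots$ in $\omega_1$ has countable supremum and is therefore bounded, not cofinal. Hence a subsequential limit $y$ of $\{y_{i_n}\}$ need not be an accumulation point of the full net $\{y_i\}_{i\in I}$; for $i_0$ above all the $i_n$ there is no reason any $y_i$ with $i\succeq i_0$ lies near $y$. The paper's proof glosses over precisely this step (it simply asserts that the extracted limit point is an accumulation point of the net), so you are not worse off than the paper and deserve credit for isolating the issue. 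One way to close the gap: use the sequential hypothesis~(ii) with the constant sequence $x_n\equiv x$, together with~(i), to show that the level set $K=\{y\in\Phi(x):u(x,y)\le\lambda\}$ is compact; then for each $i_0\in I$ extract a sequence from the tail $\{i:i\succeq i_0\}$ to obtain $\overline{\{y_i:i\succeq i_0\}}\cap K\ne\emptyset$; finally, compactness of $K$ and the finite intersection property (directedness of $I$ makes these closed sets downward filtered) yield a point in $\bigcap_{i_0\in I}\overline{\{y_i:i\succeq i_0\}}\cap K\subseteq\Phi(x)$, which is the required accumulation point of the net.
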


\begin{proof}
Condition (i) from the definition of $\K\N$-inf-compactness coincides with assumption (i) of the lemma. The rest of the proof establishes the equivalency of assumption (ii) of the lemma and assumption (ii) from Definition~\ref{def:locF-ic}. 

Let  assumption (ii) of the lemma hold. Consider a net $\{x_i
\}_{i\in I}$ with values in $\X,$ that converges to $x\in Z$, and
a net $\{y_i\}_{i\in I}$  defined on the same ordered set $I$ with $y_i\in
\Phi(x_i)$, $i\in I,$ and
satisfying the condition that the set $\{u(x_i,y_i): i\in I \}$ is bounded
above. Let us prove that $\{y_i\}_{i\in I}$ has an accumulation point $y\in \Phi(x).$
Since the space $\X$ is metrizable, it is first-countable and each its point has a countable neighborhood basis (local base). That is, for $x \in Z$ there exists a sequence $\oo_1, \oo_2, \ldots$ of neighborhoods of $x$ such that for each neighborhood $\oo(x)$ of $x$ there exists an integer $n$ with $\oo_n$ contained in $\oo(x)$.
Since the net $\{x_i\}_{i\in I}$ converges to $x$, for each $N=1,2,\ldots,$ there exists an index  $i_N\in I$ such that $x_i\in \oo_N$ for each $i\succeq i_N$. Thus, the sequence $\{x_{i_N}\}_{N=1,2,\ldots}$ converges to $x$, and the sequence $\{y_{i_N}\}_{N=1,2,\ldots},$ with $y_{i_N}\in\Phi(x_{i_N})$, $N=1,2,\ldots$, satisfies the condition that the set  $\{u(x_{i_N},y_{i_N})\,:\, N=1,2,\ldots \}$ is bounded
above. In view of assumption (ii) of the lemma, the sequence $\{y_{i_N}\}_{N=1,2,\ldots}$ has a limit point $y\in\Phi(x)$. Therefore, $y\in \Phi(x)$ is the accumulation point of the net $\{y_i\}_{i\in I}$.

Let assumption (ii) of Definition~\ref{def:locF-ic} hold. Consider a sequence $\{x_n \}_{n=1,2,\ldots}$ with values in $\X,$ that
converges to $x\in Z,$ and a sequence
$\{y_n \}_{n=1,2,\ldots}$ with $y_n\in \Phi(x_n)$, $n=1,2,\ldots,$
such that the sequence $\{u(x_n,y_n)
\}_{n=1,2,\ldots}$ is bounded above. Let us prove that the sequence $\{y_n \}_{n=1,2,\ldots}$ has a limit point $y\in
\Phi(x).$ Let $C$ be a closure of the set $\{(x_n,y_n)\,:\,n=1,2,\ldots\}$ in $\X\times\Y$. Condition (ii) of Definition~\ref{def:locF-ic} yields that $C$ is a compact set and $C\subseteq \{(x_n,y_n)\,:\,n=1,2,\ldots\}\cup {\rm Gr}_{\{x\}}(\Phi)$. Since the space $\X\times\Y$ is metrizable, the sequence $\{(x_n,y_n)
\}_{n=1,2,\ldots}\subset C$ has a convergent subsequence $\{(x_{n_k},y_{n_k})\}_{k=1,2,\ldots}$ to  $(x,y)\in C$. Since $C\subset \Gr_\X(\Phi)$, then $(x,y)\in{\rm Gr}_{\{x\}}(\Phi)\subseteq {\rm Gr}_{Z}(\Phi)$. Therefore, the sequence $\{y_n \}_{n=1,2,\ldots}$ has a limit point $y\in
\Phi(x).$  \hfill$\Box$
\end{proof}

\section{Proofs of the Main Results}

This section contains the proofs of Theorems~\ref{th:locprop}--\ref{c: BergeLoc123} and Corollary~\ref{cor:locprop}.

\textit{Proof of Theorem~\ref{th:locprop}.} Let $x\in\X$, and the
function $u:{\rm Gr}_{\X}(\Phi)\subseteq \X\times\Y\to \overline{\mathbb{R}}$ be
$\K\N$-inf-compact on $\Gr_{\{x\}}(\Phi)$. If $v(x)=+\infty$, then
$\Phi^*(x)=\Phi(x)$. Otherwise, $\Phi^*(x)$ is a nonempty compact
set; cf. Feinberg et al. \cite[Theorem~3.1]{FKV}. Let us prove that the
function $v:\X\to\overline{\mathbb{R}}$ is lower semi-continuous at
$x$.  If $v(x)=-\infty$, then $v$ is lower semi-continuous at $x$.
Let $v(x)>-\infty$. Consider a net $\{x_i \}_{i\in I}$ with values
in $\X$ converging to $x$. Choose a subnet $\{x_j \}_{j\in J}$ of
the net $\{x_i \}_{i\in I}$ such that $\ilim_i v(x_i)=\lim_j
v(x_j)$. There are two alternatives: either $\{v(x_j)\}_{j\in J}$
converges to $+\infty$, or $\{v(x_j)\}_{j\in J}$ is bounded above.
In the first case $v(x)\le \ilim_i v(x_i)=+\infty$, that is, $v$ is
lower semi-continuous at $x$. If the second alternative holds, then
for each $j\in J$ there exists $y_j\in \Phi(x_j)$ such that
$\{|v(x_j)-u(x_j,y_j)|\}_{j\in J}$ converges to zero and, therefore,
the net $\{u(x_j,y_j)\}_{j\in J}$ is bounded above. Condition (ii)
of the definition of $\K\N$-inf-compactness on $\Gr_{\{x\}}(\Phi)$
implies that the net $\{y_j\}_{j\in J}$ has an accumulation point
$y\in \Phi(x)$. Condition (i) of the definition of
$\K\N$-inf-compactness on $\Gr_{\{x\}}(\Phi)$ yields that $\lim_j
v(x_j)\ge u(x,y)\ge v(x)$. Since $\ilim_i v(x_i)=\lim_j v(x_j)$, the
function $v$ is lower semi-continuous at $x$.
  \hfill $\Box$


\textit{Proof of Theorem~\ref{th:locUSC111}.} Let $u:{\rm Gr}_{\X}(\Phi)\subseteq \X\times\Y\to \overline{\mathbb{R}}$, $x\in \X,$ and a net
$\{x_\alpha\}_{\alpha\in I}$ with values in $\X$ converge to $x$. If
$v(x)=+\infty$, then $v$ is obviously upper semi-continuous at $x$.
Consider the case $v(x)<+\infty$.

If assumption (i) holds, then, in view of the lower semi-continuity
of $\Phi$ at $x$, for each $y\in \Phi(x)$ and each $\alpha\in I$
there exists $y_\alpha\in\Phi(x_\alpha)$ such that the net
$\{y_\alpha\}_{\alpha \in I}$ converges to $y$ in $\Y$. Therefore,
due to upper semi-continuity of $u:{\rm Gr}_{\X}(\Phi)\subseteq \X\times\Y\to \overline{\mathbb{R}}$ at
each $(x,y)\in \Gr_{\{x\}}(\Phi)$,
\[\slim_\alpha v(x_\alpha)\le  \slim_\alpha u(x_\alpha,y_\alpha)\le u(x,y)\mbox{ for each }y\in\Phi(x),
\] that is,
\[
\slim_\alpha v(x_\alpha)\le \inf_{y\in \Phi(x)}u(x,y)=v(x),
\]
and $v$ is upper semi-continuous at $x$; Hu and Papageorgiou \cite[Proposition~3.1, p.~82]{Hu}.

Let assumption (ii) holds. Denote by $\tau_\X$ and $\tau_\Y$ the
topologies (the families of all open subsets) of $\X$ and $\Y$
respectively. Note that the family of open sets
$\tau_{\X\times\Y}^b:=\{\oo_\X\times\oo_\Y\,:\, \oo_\X\in \tau_\X,\,
\oo_\Y\in\tau_\Y \}$ is a base of the topology of the Hausdorff
topological product space $\X\times\Y$ (that is, each open subset of
$\X\times\Y$ can be presented as a union of some sets from
$\tau_{\X\times\Y}^b$).  This is true because the product topology (so-called
natural or Hausdorff topology) on the Cartesian  product of a finite
number of Hausdorff spaces coincides with the box topology.

If for any $\lambda>v(x)$ there is a neighborhood of $x$, $\oo(x)\in \tau_\X$, such that
\begin{equation}\label{eq:ii2_1}
v(x^*)< \lambda \mbox{ for each }x^*\in \oo(x),
\end{equation}
then the function $v$ is upper semi-continuous at $x$. Fix an arbitrary $\lambda>v(x)$. The rest of the proof establishes the existence of $\oo(x)\in\tau_\X$ satisfying (\ref{eq:ii2_1}).

In view of the upper semi-continuity of $u:{\rm Gr}_{\X}(\Phi)\subseteq \X\times\Y\to \overline{\mathbb{R}}$ at each $(x,y)\in\Gr_{\{x\}}(\Phi^*)$, for each $y\in \Phi^*(x)$ there exists $\oo_\X^{x,y}(x)\times\oo_\Y^{x,y}(y)\in \tau_{\X\times\Y}^b$ such that
\begin{equation}\label{eq:ii2_2}
u(x^*,y^*)<\lambda\mbox{ for all }(x^*,y^*)\in \left(\oo_\X^{x,y}(x)\times\oo_\Y^{x,y}(y)\right)\cap \Gr_\X(\Phi)
\end{equation}
because $\lambda>v(x)=u(x,y)$ for any $y\in \Phi^*(x)$.
The collection of open sets $\{\oo_\X^{x,y}(x)\times\oo_\Y^{x,y}(y)\,:\, y\in \Phi^*(x)\}$ covers the set $\Gr_{\{x\}}(\Phi^*)$. Tychonoff's theorem yields that the set $\Gr_{\{x\}}(\Phi^*)\subset \X\times\Y$
is compact. Therefore, there is a finite cover $\{\oo_\X^{x,y_1}(x)\times\oo_\Y^{x,y_1}(y_1),\oo_\X^{x,y_2}(x)\times\oo_\Y^{x,y_2}(y_2),\ldots, \oo_\X^{x,y_N}(x)\times\oo_\Y^{x,y_N}(y_N)\}$, $y_1,y_2\ldots,y_N\in \Phi^*(x)$, $N=1,2,\ldots,$ of $\Gr_{\{x\}}(\Phi^*)$. Since $x\in \oo_\X^{x,y_j}(x)$ for each $j=1,2,\ldots,N$, the family of sets $\{\oo_\X(x)\times\oo_\Y^{x,y_1}(y_1),\oo_\X(x)\times\oo_\Y^{x,y_2}(y_2),\ldots, \oo_\X(x)\times\oo_\Y^{x,y_N}(y_N)\}$, where $\oo_\X(x)=\cap_{j=1}^N \oo_\X^{x,y_j}(x)\in \tau_\X$, covers the set $\Gr_{\{x\}}(\Phi^*)$, that is,
\begin{equation}\label{eq:ii2_3}
\Gr_{\{x\}}(\Phi^*)\subseteq \oo_\X(x)\times\cup_{j=1}^N \oo_\Y^{x,y_j}(y_j)\subseteq \cup_{y\in \Phi^*(x)} \oo_\X^{x,y}(x)\times\oo_\Y^{x,y}(y).
\end{equation}
In particular,
$\oo_\Y(\Phi^*(x)):=\cup_{j=1}^N\oo_\Y^{x,y}(y_j)\in \tau_\Y$ is a neighborhood of $\Phi^*(x)$.
Assumption (ii) implies that for the neighborhood $\oo_\Y(\Phi^*(x))$ of $\Phi^*(x)$ there exists a neighborhood $\oo_\X^1(x)$ of $x$ such
that $\oo_\Y(\Phi^*(x))\cap \Phi(x^*)\ne \emptyset$ for all $x^*\in
\oo_\X^1(x)$. Therefore, according to (\ref{eq:ii2_2}) and (\ref{eq:ii2_3}),
\[
v(x^*)\le\inf_{y^*\in \oo_\Y(\Phi^*(x))\cap \Phi(x^*) }u(x^*,y^*) <\lambda ,
\]
for each $x^*\in \oo(x):=\oo_\X(x)\cap \oo_\X^1(x)\in \tau_\X$. Thus  (\ref{eq:ii2_1}) holds.
   \hfill$\Box$

%
%


\textit{Proof of Theorem~\ref{th:usc}.} Let $u:{\rm Gr}_{\X}(\Phi)\subseteq \X\times\Y\to \overline{\mathbb{R}}$, $x\in\X$, and let $v: \X\to \overline{\R}$ be the
value function defined in (\ref{eq1}). If $u$ is $\K\N$-inf-compact
on $\graph_{\{x\}}(\Phi)$, $v$ is continuous at $x$, and
$v(x)<+\infty$, then Theorem~\ref{th:locprop} yields that
$\Phi^*(x)$ is a nonempty compact set. Let us prove that $\Phi^*$ is
upper semi-continuous at $x$. Suppose, on the contrary, that
$\Phi^*$ is not upper semi-continuous at $x\in \X$. Then there is an
open neighborhood $\oo(\Phi^*(x))$ of $\Phi^*(x)$ such that, for
every neighborhood $\oo$ of $x$, there is an $x_\oo \in \oo$ with
$\Phi^*(x_\oo) \not\subseteq \oo(\Phi^*(x))$. Therefore, there
exists $y_\oo\in \Phi^*(x_\oo) \setminus \oo(\Phi^*(x))$. Now
consider the nets $\{x_\oo: \oo \in \tau_b^{\rm loc}(x)\}$ and
$\{y_\oo: \oo \in \tau_b^{\rm loc}(x)\}$, where $\tau_b^{\rm
loc}(x)$ is the directed set of neighborhoods of $x$; see  Zgurovsky
et al.~\cite[p.~9]{ZMK}  for details. The net $\{x_\oo\,:\, \oo \in
\tau_b^{\rm loc}(x)\}$ converges to $x.$ Then each net
$\{y_\oo\,:\,\oo\in \tau_b^{\rm loc}(x)\},$ defined on the same
ordered set $\tau_b^{\rm loc}(x)$ with $y_\oo\in \Phi^*(x_\oo)$,
$\oo\in \tau_b^{\rm loc}(x),$ has an accumulation point $y\in
\Phi^*(x).$  Indeed, $v(x_\oo)=u(x_\oo,y_\oo)$ for each $\oo\in
\tau_b^{\rm loc}(x)$. Since $x_\oo\to x$ and $v$ is continuous at
$x$, the net $\{v(x_\oo)\,:\,\oo\in \tau_b^{\rm loc}(x)\}$ is
bounded above by a finite constant eventually in $\tau_b^{\rm
loc}(x)$. Therefore, $\K\N$-inf-compactness of the function $u$ on
$\Gr_{\{x\}}(\Phi)$ implies that the net $\{y_\oo\,:\,\oo\in
\tau_b^{\rm loc}(x)\}$ has an accumulation point
$y\in \Phi(x).$ Since $u:{\rm Gr}_{\X}(\Phi)\subseteq \X\times\Y\to \overline{\mathbb{R}}$ is lower semi-continuous at each $(x,y)\in\Gr_{\{x\}}(\Phi)$ and $v$ is  continuous at $x$, 
then $y\in\Phi^*(x)$, that is, the net
$\{y_\oo\,:\,\oo\in \tau_b^{\rm loc}(x) \}$ has an accumulation point $y \in \Phi^*(x)\subseteq
\oo(\Phi^*(x))$. The net $\{y_\oo\,:\,\oo\in \tau_b^{\rm loc}(x)\}$ lies in the closed set $\oo(\Phi^*(x))^c$, which is the
complement of $\oo(\Phi^*(x))$, and thus $y\in \oo(\Phi^*(x))^c$. This contradiction
implies that $\Phi^*$ is upper semi-continuous~at~$x$.
$\ $   \hfill $\Box$
%

\textit{Proof of Theorem~\ref{thm: BergeLoc}.}
According to Theorem~\ref{th:locprop}, the function
$v:\X\to\overline{\mathbb{R}}$ is lower semi-continuous at $x$ and $\Phi^*(x)$ is a nonempty compact set. If assumptions (i) and (ii) of Theorem~\ref{thm: BergeLoc} imply each other, then
Theorem~\ref{th:locUSC111}(ii) yields that the value function $v$ is 
upper semi-continuous at $x$.
Therefore, to finish the proof, it is sufficient to prove that assumptions (i) and (ii) of Theorem~\ref{thm: BergeLoc} are equivalent.

The implication ``$(i) \Longrightarrow (ii)$'' directly follows from Theorems~\ref{th:locprop}, \ref{th:locUSC111}(ii), and~\ref{th:usc}. Vice versa, if the solution multifunction $\Phi^*:\X\to 2^\Y$  is
upper semi-continuous at $x$, then for
each neighborhood $\oo(\Phi^*(x))$ of the set $\Phi^*(x)$, there is a
neighborhood of $x$, say $\oo(x)$, such that
$\Phi^*(x^*)\subseteq \oo(\Phi^*(x))$ for all $x^*\in \mathcal{O}(x)$. Thus,
for each neighborhood $\oo(\Phi^*(x))$ of
the set $\Phi^*(x)$ there exists a neighborhood $\oo(x)$ of $x$ such
that $\oo(\Phi^*(x))\cap \Phi(x^*)\ne \emptyset$ for all $x^*\in
\oo(x)$. Therefore, assumptions (i) and (ii) of Theorem~\ref{thm: BergeLoc} are equivalent. \hfill $\Box$


\textit{Proof of Corollary~\ref{cor:locprop}.}
According to Theorem~\ref{th:locprop},  the function
$v_{\lambda,x}:\X\to\overline{\mathbb{R}}$ is lower semi-continuous at $x$ and $\Phi_{\lambda,x}^*(x)$ is a nonempty compact set because $v_{\lambda,x}(x)\le\lambda<+\infty$. Since $u(x,y)\le\lambda$ for some
$y\in\Phi(x)$, then $y\in\Phi^*(x)$ and $v(x)=v_{\lambda,x}(x)$. Moreover, if $\Phi_{\lambda,x}^*(x^*)\ne\emptyset$ for some $x^*\in \X$, then $v_{\lambda,x}(x^*)=v(x^*)$; otherwise, $v(x^*)\ge \lambda\ge v(x)$. Therefore, the function
$v:\X\to\overline{\mathbb{R}}$ is lower semi-continuous at $x$ and $\Phi^*(x)$ is a nonempty compact set. \hfill$\Box$

\textit{Proof of Theorem~\ref{c: BergeLoc123}.} According to
Corollary~\ref{cor:locprop}, the value function $v$ is lower
semi-continuous at $x$ and $\Phi^*(x)\in\K(\Y)$. Since
$u(x,y)<\lambda$ for some $y\in\Phi(x)$ and the function
$u:{\rm Gr}_{\X}(\Phi)\subseteq \X\times\Y\to \overline{\mathbb{R}}$ is upper semi-continuous
at all $(x,y)\in{\rm Gr}_{\{x\}}(\Phi^*)$, there exists a
neighborhood of $x$, say $\oo(x)$, such that the level set
$\mathcal{D}_{u(x^*,\cdot)}(\lambda;\Phi(x^*))$ is nonempty for each
$x^*\in \oo(x)$. Therefore, $v_{\lambda,x}(x^*)=v(x^*)$ and
$\Phi_{\lambda,x}^*(x^*)=\Phi^*(x^*)$ for each $x^*\in \oo(x)$. 
Note that assumption (ii) of Theorem~\ref{c: BergeLoc123} implies assumption (i) of Theorem~\ref{c: BergeLoc123}. Theorem~\ref{th:locUSC111} yields that
the function $v:\X\to\overline{\R}$ is upper semi-continuous at $x$.
Therefore, the function $v=v_{\lambda,x}$ is continuous at $x$.
 Being applied to $u_{\lambda,x}$, $v_{\lambda,x}$ and $\Phi_{\lambda,x},$ Theorem~\ref{th:usc}, yields that
assumptions (i) and (ii) of Theorem~\ref{c:
BergeLoc123} are equivalent.
\hfill$\Box$

\section{Examples and Counterexamples}

The following example illustrates that Theorem~\ref{th: locprop1BS} can be applied to a function $u:\X\times\Y\to\R,$ which is not $\K\N$-inf-compact on $\Gr_{\X}(\Phi)$.

\begin{example}\label{exa1}
{\rm  Let $\X=\Y=\R$, $u(x^*,y^*)=\min\{|x^*-y^*|,1\}$, $\Phi(x^*)=\R$, $x^*,y^*\in \R$. Then $u$ is neither $\K$-inf-compact nor $\K\N$-inf-compact on $\R^2$, because\newline $\mathcal{D}_{u}(1;\Gr_{\{0\}}(\Phi))=\Y\notin\K(\Y)$.
Thus, the assumptions of Theorem~\ref{th:BGL} do not hold, but the assumptions of Theorem~\ref{th: locprop1BS}
are obviously hold for $x=0$, $\lambda=\frac12$, and $\Phi^*(0)=\{0\}$.}
\hfill $\Box$\end{example}

In the following example the assumptions of Theorem~\ref{th:BGL} hold, but the assumptions of Theorem~\ref{th: locprop1BS} do not hold.
\begin{example}\label{exa:}
{\rm
Let $\X=\Y=l_1:=\{(a_1,a_2,\ldots)\,:\, |a_1|+|a_2|+\ldots <\infty\}$ be metric spaces with the metric
$\rho(a,b):=\|a-b\|=|a_1-b_1|+|a_2-b_2|+\ldots$ for each $a=(a_1,a_2,\ldots), \,b=(b_1,b_2,\ldots)\in l_1$.
Denote $\bar{0}:=(0,0,\ldots)$ and $ \bar{B}_\delta(\bar{0})=\{a=(a_1,a_2,\ldots)\in l_1\,:\, |a_1|+|a_2|+\ldots \le \delta \}$, where $\delta>0.$  The balls
$ \bar{B}_\delta(\bar{0})$ are not compact sets because the sequence $\{a^{(n)}\}_{n=1,2,\ldots},$ with $a^{(n)}_n=\delta$ and $a^{(n)}_i=0$ for $i\ne n,$ does not contain a convergent subsequence.


Let $\Phi(x^*)=\{x^*\}$  and  $u(x^*,y^*)=\frac12(\|x^*\|+\|y^*\|)$ for all $(x^*,y^*)\in\X\times \Y$.  Fix $x=\bar{0}.$
Note that $u$ is continuous on $\X\times\Y$ and $\K\N$-inf-compact on $\Gr_\X(\Phi)=\{(z,z)\,:\, z\in \X\}$. The set-valued mapping $\Phi:\X\to \K(\Y)$ is continuous. Therefore, the assumptions of Theorem~\ref{th:BGL} hold.

Assumption (iii) of Theorem~\ref{th: locprop1BS} does not hold. Indeed, on the contrary, if there
exist $\lambda\in \R$ and a compact set $C\subset \Y$ such that, for
every $x^*$ in a neighborhood of $\bar{0},$ say $\oo(\bar{0})$, the level set
$\mathcal{D}_{u(x^*,\cdot)}(\lambda;\Phi(x^*))$ is nonempty and
contained in $C$, then there exists $\delta>0$ such that $\oo(\bar{0})\supseteq \bar{B}_\delta(\bar{0}),$ $\delta\le \lambda$, and $\bar{B}_\delta(\bar{0})\subseteq C$. Since the closed subset $\bar{B}_\delta(\bar{0})$ of a compact set $C$ is compact, we obtain a contradiction, because the ball $\bar{B}_\delta(\bar{0})$ is not compact. 
}
\hfill $\Box$\end{example}

The following example demonstrates that (a): assumptions (i) and (ii) of Theorem~\ref{th:locUSC111} do not imply each other; and (b) statements (B2) and (BS2) do not imply each other.
\begin{example}\label{exa2}
{\rm Set $\X=[0,1]$, $\Y=[-1,0]$, $u(x^*,y^*)=|x^*-y^*|$, $(x^*,y^*)\in\X\times\Y$, and $x=0$. Note that $u$ is finite and continuous on $\X\times\Y$.

Let  $\Phi(x^*)=[-1,0)$, $x^*\in\X.$ Then $\Phi: \X \to \Ss(\Y)$ is a lower semi-continuous set-valued mapping. Thus assumption (i) of Theorem~\ref{th:locUSC111} and the assumptions of statement (B2) hold.  However, neither assumption (ii) of Theorem~\ref{th:locUSC111} nor the assumptions of statement (BS2) 
hold, because $\Phi(x^*)=\emptyset$ for each $x^*\in \X$.

Now let $\Phi(x^*)=\{0, -{\bf I}\{x^*= 0\}\}$, $x^*\in \X.$ Then
$\Phi^*(x^*)=\{0\}\in \K(\Y)$, for each $x^*\in \X$, and for each neighborhood $\oo(\Phi^*(x))$ of
the set $\Phi^*(x)$ there exists a neighborhood $\oo(x)$ of $x$ such
that $\oo(\Phi^*(x))\cap \Phi(x^*)\ne \emptyset$ for all $x^*\in
\oo(x).$ Thus assumption (ii)  of Theorem~\ref{th:locUSC111} and the assumptions of statement (BS2) hold.  However, neither assumption (i) of Theorem~\ref{th:locUSC111} nor  the assumptions of statement (B2)  hold, because $\Phi$ is not lower semi-continuous at $x$.}
\hfill $\Box$\end{example}

The following two examples yield that statements (B3) and (BS3) do not imply each other.
\begin{example}\label{exa2a}
{\rm (The assumptions of statement (BS3) hold, but the assumptions
of statement (B3) do not hold). For this purpose, consider
Example~\ref{exa1}. Note that: (a) the function $u$ is continuous on
$\X \times \Y$, (b) the set-valued mapping $\Phi$ is closed, (c) for
every $x^*$ in a neighborhood of $x=0$, say $(-\frac12,\frac12)$,
the level set
$\mathcal{D}_{u(x^*,\cdot)}(\lambda;\Phi(x^*))=[x^*-\frac12,x^*+\frac12]$
is nonempty and is contained in the compact set $C=[-1,1]$, and (d)
for each neighborhood $\oo(\Phi^*(x))$ of the set $\Phi^*(x)=\{0\}$
there exists a neighborhood $\oo(x)=(-1,1)$ of $x$ such that
$\oo(\Phi^*(x))\cap \Phi(x^*)\ne \emptyset$ for all $x^*\in \oo(x).$  The latter is true
since $\Phi\equiv\R$. Therefore, the assumptions of statement
(BS3) hold, but the assumptions of statement (B3) do not hold.}
\hfill $\Box$\end{example}

\begin{example}\label{exa2b}
{\rm (The assumptions of statement (B3) hold, but the assumptions of statement (BS3) do not hold). Let $\X=[0,1]$, $\Y=[0,2]$, $\Phi\equiv [0,2]$, $u(x^*,y^*)={\bf I}\{x^*-y^*<0\}$, $x^*\in\X$, $y^*\in\Y$. Then $u$ is $\K\N$-inf-compact on $\X\times\Y$, $v\equiv 0$ is continuous, but $u$ is discontinuous at each point $(x,x)\in [0,1]^2$.
Therefore, the assumptions of statement (B3) hold, but the assumptions of statement (BS3) do not hold.}
\hfill $\Box$\end{example}

The following example shows that $\K\N$-inf-compactness of $u$ on $\graph_{\{x\}}(\Phi)$ in Theorem~\ref{th:usc} cannot be weakened to lower semi-continuity of $u:{\rm Gr}_{\X}(\Phi)\subseteq \X\times\Y\to \overline{\mathbb{R}}$, inf-compactness of $u(x,\cdot)$ on $\Phi(x)$, and closeness of $\Gr_\X(\Phi)$ in $\X\times\Y$ with $\Phi:\X\to\K(\Y)$.
\begin{example}\label{exa3}
{\rm Let $\X=[0,1]$, $\Y=[0,+\infty)$, $u\equiv 0$, $x=0$, $\Phi(x^*)=\{1/x^*\}$ if $x^*\in(0,1]$, and $\Phi(0)=\{0\}$. Then $u$ is continuous on $\X\times\Y$, $v\equiv 0$ is continuous on $\X$, $\Phi^*(x^*)=\Phi(x^*)\in \K(\Y)$, $x^*\in \X$, $\Gr_\X(\Phi)$ is closed in $\X\times\Y$, $u(0,\cdot)\equiv 0$ is inf-compact on $\Phi^*(x)=\{0\}$, but $\Phi^*$ is not upper semi-continuous at $x=0$.}
\hfill $\Box$\end{example}

The following example demonstrates  that continuity  of $v$ at $x$ in Theorem~\ref{th:usc} is the essential assumption.

\begin{example}\label{exa4}
{\rm Let $\X=\Y=[0,1]$, $u(x^*,y^*)=y^*{\bf I}\{x^*>0\}$, $x^*,y^*\in[0,1]$, $\Phi(x^*)=\{1/x^*\}$ for $x^*\in(0,1]$, $\Phi(0)=\{0\}$, and $x=0$. Then, $u$ is $\K\N$-inf-compact on $\Gr_{\X}(\Phi)$, $v(x^*)=\frac1{x^*}$ for $x^*\in(0,1]$, $v(0)=0$, and $\Phi^*(x^*)=\Phi(x^*)$, $x^*\in \X$. Thus, $v$ is discontinuous at $x$ and $\Phi^*:\X\to \K(\Y)$ is not upper semi-continuous at $x$.
}
\hfill $\Box$\end{example}

In the following example the assumptions of
Corollary~\ref{c: BergeLoc123} hold, but the assumptions of
Theorem~\ref{th: locprop1BS} do not hold.

\begin{example}\label{ex} {\rm Let $\X=\Y=[0,1]$, $\Phi(x^*)=\{x^*\}$ for all $x^*\in[0,1]$, and
$u(x^*,y^*)=-x^*{\bf I}\{x^*\in \mathbb{Q}\}$ for $x^*,y^*\in [0,1]$, where
$\mathbb{Q}$ is the set of rational numbers. Assumptions of
Corollary~\ref{c: BergeLoc123} hold for $x=0$ and $\lambda=0$,
because $\Phi_{\lambda,x}=\Phi$ is continuous, $u_{\lambda,x}=u$ is
$\K\N$-inf-compact on ${\rm Gr_{\{0\}}}(\Phi_{\lambda,x})=\{(0,0)\}$ and
it is continuous at the point $(0,0)$. Assumption (i) of
Theorem~\ref{th: locprop1BS} does not hold, because the function $x^*\to
-x^*{\bf I}\{x^*\in \mathbb{Q}\}$ is not continuous on $[0,a)$ for each $a\in (0,1).$} 
\hfill $\Box$\end{example}

The following example demonstrates that  upper semi-continuity of $u$ at all $(x,y)\in{\rm Gr}_{\{x\}}(\Phi^*)$ in Corollary~\ref{c: BergeLoc123} cannot be relaxed to upper semi-continuity of $u_{\lambda,x}$ at all $(x,y)\in{\rm Gr}_{\{x\}}(\Phi^*)$.

\begin{example}\label{exa:last} {\rm Let $\X=\Y=[0,1]$, $\Phi(x^*)=\{x^*\}$  and
$u(x^*,y^*)={\bf I}\{x^*\ne 0\}$ for all $x^*\in [0,1]$ and $y^*=x^*,$  $\lambda=\frac12$, and $x=0$. Then  $u(0,0)<\lambda$, the function $u_{\lambda,x}\equiv0 $ is
$\K\N$-inf-compact on $\Gr_{\{x\}}(\Phi_{\lambda,x})$ and upper semi-continuous at any  $(x,y)\in \Gr_\X(\Phi)$, but the function $u$ is not upper semi-continuous at $(0,0)\in{\rm Gr}_{\{0\}}(\Phi^*)$ and the value function $v(x^*)={\bf I}\{x^*\ne 0\}$, $x^*\in[0,1],$ is not continuous at $x=0$.
}
\hfill $\Box$\end{example}

\begin{acknowledgements}
The authors thank Alexander Shapiro for bringing the local maximum theorem from Bonnans  and Shapiro~\cite[Proposition~4.4]{Shapiro}  to their attention and for stimulating discussions.
\end{acknowledgements}


\bibliographystyle{elsarticle-num}

\end{document}